\newtheorem{theorem}{Theorem}
\numberwithin{theorem}{section}
\newtheorem{corollary}[theorem]{Corollary}
\newtheorem{lemma}[theorem]{Lemma}
\theoremstyle{definition}
\newtheorem{definition}[theorem]{Definition}
\newtheorem{remark}[theorem]{Remark}
\newcommand{\rca}{\mathbf{RCA}}
\newcommand{\aca}{\mathbf{ACA}}
\newcommand{\atr}{\mathbf{ATR}}
\newcommand{\supp}{\operatorname{supp}}
\newcommand{\hau}{\mathbf{H}}
\title{Predicative collapsing principles}
\author{Anton Freund}
\begin{document}

\begin{abstract}
We show that arithmetical transfinite recursion is equivalent to a suitable formalization of the following: For every ordinal $\alpha$ there exists an ordinal $\beta$ such that $1+\beta\cdot(\beta+\alpha)$ (ordinal arithmetic) admits an almost order preserving collapse into~$\beta$. Arithmetical comprehension is equivalent to a statement of the same form, with $\beta\cdot\alpha$ at the place of $\beta\cdot(\beta+\alpha)$. We will also characterize the principles that any set is contained in a countable coded $\omega$-model of arithmetical transfinite recursion resp.~arithmetical comprehension.
\end{abstract}

\keywords{Reverse mathematics, well-ordering principles, Bachmann-Howard fixed points, dilators, Veblen function, arithmetical transfinite recursion, arithmetical comprehension}
\subjclass[2010]{03B30, 03F15, 03F35}

\maketitle
{\let\thefootnote\relax\footnotetext{\copyright~2020 Association for Symbolic Logic. This is the accepted version of a paper published in \emph{The Journal of Symbolic Logic} 85(1) 2020, pp. 511-530, \href{https://doi.org/10.1017/jsl.2019.83}{doi:10.1016/j.aim.2019.106767}.}}

\section{Introduction}

Well-ordering principles (of type one) are statements which assert that ``$T(X)$~is well-founded for any well-order~$X$", for some transformation $T$ of linear orders. We will consider such statements from the viewpoint of reverse mathematics (see~\cite{simpson09} for a comprehensive introduction). In this setting $X$ ranges over ordered subsets of~$\mathbb N$. The fact that $T$ is a transformation of linear orders can usually be proved in~$\rca_0$, so that the entire strength of the well-ordering principle lies in the preservation of well-foundedness.

The literature contains many results that characterize important $\Pi^1_2$-statements in terms of well-ordering principles. In order to explain our approach we focus on the following equivalence (but further results will be covered below):
\begin{theorem}[H.~Friedman {[unpublished]}; M.~Rathjen and A.~Weiermann~\cite{rathjen-weiermann-atr}]\label{thm:friedman-atr}
 The following are equivalent over $\rca_0$:
 \begin{enumerate}[label=(\roman*)]
  \item arithmetical transfinite recursion (i.\,e.~the principal axiom of $\atr_0$),
  \item the statement that $\varphi (1+X)0$ is well-founded for any well-order~$X$.
 \end{enumerate}
\end{theorem}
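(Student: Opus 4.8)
The plan is to prove the two implications by rather different means: the step from (i) to (ii) is a soundness statement, expressing that $\atr_0$ has enough strength to certify the well-foundedness of the relevant Veblen notation system relative to the parameter~$X$, whereas essentially all of the work lies in the step from (ii) to (i). For the first implication I would argue inside $\atr_0$: given a well-order~$X$, use arithmetical transfinite recursion along~$1+X$ to build, as a set, the iterated Veblen hierarchy $\langle\varphi_a:a\in 1+X\rangle$ of linear orders --- $\varphi_0$ the usual system for $\omega$-exponentiation, $\varphi_{a+1}$ obtained from $\varphi_a$ by adjoining a symbol for the normal function enumerating the fixed points of its top function, and $\varphi_a$ for limit~$a$ the common fixed points of the earlier ones. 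The notation system for $\varphi(1+X)0$ is a reindexing of the direct limit of this family; its well-foundedness is then the standard fact that $\atr_0$ certifies the well-foundedness of predicative notation systems relative to a parameter --- equivalently, that $\atr_0$ proves that the Veblen dilator preserves well-orderedness --- whose proof runs by the usual simultaneous construction of the distinguished (accessibility) sets along~$1+X$, certifying layer by layer the absence of infinite descending sequences (a descent in layer $a+1$ being driven by the normal-form analysis into layer~$a$, and a descent in a limit layer occurring already below).

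For the converse, the first step is to extract $\aca_0$. The notation system for $\varphi(1+X)0$ contains a copy of~$1+X$, namely $\{\varphi_a(0):a\in 1+X\}$, and is closed under $\omega$-exponentiation, so there is an $\rca_0$-provable order embedding $\omega^{1+X}\hookrightarrow\varphi(1+X)0$ (send $\omega^{a_1}+\dots+\omega^{a_k}$ to $\varphi_0(\varphi_{a_1}(0))+\dots+\varphi_0(\varphi_{a_k}(0))$). Hence (ii) implies that $X\mapsto\omega^X$ preserves well-foundedness, and by the theorem of Girard and Hirst this is equivalent to $\aca_0$.

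The remaining step is to derive arithmetical transfinite recursion from $\aca_0+$(ii). I would work with the jump-hierarchy form of the principle (equivalent to (i) over $\rca_0$, see~\cite{simpson09}): for every set~$Z$ and every well-order~$\prec$ with field~$A$ there is an $H\subseteq A\times\mathbb N$ with $(H)_a=(Z\oplus\bigoplus_{b\prec a}(H)_b)'$ for all $a\in A$. Given $Z$ and~$\prec$, apply (ii) to~$\prec$ to obtain that $\varphi(1+\prec)0$ is well-founded, and reduce to the key lemma: $\aca_0$ together with well-foundedness of $\varphi(1+\prec)0$ implies that such an~$H$ exists. The strategy for the lemma is a pseudohierarchy argument: if no such~$H$ existed, one would produce, using only arithmetical comprehension, a non-well-founded linear order carrying a ``hierarchy'' defined along it that correctly continues the genuine partial hierarchies along~$\prec$, and from the ill-foundedness of that pseudohierarchy one would read off an infinite descending sequence through the Veblen notation system for $\varphi(1+\prec)0$, contradicting (ii). Morally this works because $\varphi(1+\prec)0$ is precisely the ordinal gauging the strength of ``arithmetical comprehension iterated along~$\prec$'', so that its well-foundedness is exactly the fuel needed to run that iteration with only arithmetical comprehension available.

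I expect this lemma to be the main obstacle. The delicate points are, first, to set up the bookkeeping --- equivalently, an infinitary calculus for iterated arithmetical comprehension with ordinal bounds below $\varphi(1+\prec)0$, or a Gandy-style basis construction --- so that a breakdown of the iteration provably yields a descending sequence in the Veblen system rather than merely in~$\prec$; and, second, to carry all of this out within $\aca_0$. The gap between ``$\prec$ is well-founded'' and ``$\varphi(1+\prec)0$ is well-founded'' is exactly the room that makes the argument possible, and it has to be exploited quantitatively.
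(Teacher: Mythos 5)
A preliminary remark: the paper contains no proof of Theorem~\ref{thm:friedman-atr}. It is imported as a known result of Friedman and of Rathjen and Weiermann~\cite{rathjen-weiermann-atr} and used as a black box (combined with Theorem~\ref{thm:veblen-collapsing} to obtain Corollary~\ref{cor:exp-atr}). So your attempt can only be measured against the published arguments, not against anything in this paper.

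Your outline does track the shape of those arguments: for (i)$\Rightarrow$(ii) a transfinite iteration of the derivative construction followed by a Sch\"utte-style distinguished-sets argument, and for (ii)$\Rightarrow$(i) first extracting $\aca_0$ and then recovering the jump hierarchy along $\prec$ from the well-foundedness of $\varphi(1+\prec)0$. But as a proof it has a genuine gap, which you yourself flag: the ``key lemma'' ($\aca_0$ plus well-foundedness of $\varphi(1+\prec)0$ yields the jump hierarchy along $\prec$) is where the entire content of the hard direction lives, and you only name candidate strategies for it (pseudohierarchies, or an infinitary calculus with ordinal bounds) without executing either. In particular, nothing in your sketch shows where the derivative structure of the Veblen hierarchy is actually consumed --- that is, why the answer is $\varphi(1+X)0$ rather than $\varepsilon_X$, whose well-foundedness only buys $\aca_0^+$; the published proofs consist precisely of the ordinal bookkeeping that makes a failed iteration of arithmetical comprehension along $\prec$ produce a descent through $\varphi(1+\prec)0$ and not merely through $\prec$ or $\varepsilon_\prec$. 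Two further points. First, your explicit embedding $\omega^{a_1}+\dots+\omega^{a_k}\mapsto\varphi_0(\varphi_{a_1}0)+\dots+\varphi_0(\varphi_{a_k}0)$ uses terms $\varphi_\bot(\varphi_a 0)$ that are not well formed in the notation system, since forming $\varphi_x\beta$ requires $h(\beta)\leq_{1+X}x$; one must instead use the system's internal exponentiation, under which $\omega^{\varphi_a0}=\varphi_a0$ for $a\neq\bot$. This is harmless but shows that the normal-form side conditions have to be respected. Second, in the direction (i)$\Rightarrow$(ii), ``certifying layer by layer the absence of infinite descending sequences'' is transfinite induction on a $\Pi^1_1$ statement, which cannot be run as a least-counterexample argument in $\atr_0$ because the set of counterexamples to a $\Pi^1_1$ formula need not exist there; the distinguished-sets machinery (or a detour through suitable $\omega$-models) is needed exactly to reduce this to an induction that $\atr_0$ can perform, and that reduction is absent from your sketch. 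In short: the roadmap is the right one, but the two places where all the work is done are left as declared intentions rather than proofs.
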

The transformation in~(ii) is related to the Veblen function, which iterates derivatives of normal functions into the transfinite (cf.~\cite{schuette77,marcone-montalban}). In the context of reverse mathematics, the relevant values of this function can be represented by relativized ordinal notation systems~$\varphi (1+X)0$ (see~\cite[Definition~2.2]{rathjen-weiermann-atr} for details; our summand $1$ corresponds to the minimal element $0_Q$ in the cited definition).

The present paper shows that complicated well-ordering principles can, in a certain sense, be reduced to much simpler ones. In particular we will reduce the well-ordering principle $X\mapsto\varphi (1+X)0$ to the family of order transformations
\begin{equation*}
Y\mapsto T^\varphi_X(Y):=1+(Y+X)\times Y,
\end{equation*}
indexed by the order $X$ (unless stated otherwise, ``order" will always mean ``linear order"). Here $1$ denotes the order with a single element. Also recall that the sum of two orders $X=(X,<_X)$ and $Y=(Y,<_Y)$ has underlying set
\begin{align*}
X+Y=\{\langle 0,x\rangle\,|\,x\in X\}\cup\{\langle1,y\rangle\,|\,y\in Y\}.
\end{align*}
For $x<_X x'$ and $y<_Y y'$ we have $\langle 0,x\rangle<_{X+Y}\langle 0,x'\rangle$ resp.~$\langle 1,y\rangle<_{X+Y}\langle 1,y'\rangle$, and $\langle 0,x\rangle<_{X+Y}\langle 1,y\rangle$ holds for any $x\in X$ and $y\in Y$. The product is given by
\begin{equation*}
X\times Y=\{\langle x,y\rangle\,|\,x\in X\text{ and }y\in Y\},
\end{equation*}
where $\langle x,y\rangle<_{X\times Y}\langle x',y'\rangle$ holds if we have $x<_X x'$, or $x=x'$ and $y<_Yy'$. Clearly the definition of sum and product is much simpler than the construction of $\varphi (1+X)0$ in~\cite[Definition~2.2]{rathjen-weiermann-atr}. The fact that sums and products of well-orders are themselves well-ordered can be proved in $\rca_0$, in contrast to Theorem~\ref{thm:friedman-atr}.

So how can $X\mapsto\varphi (1+X)0$ be reduced to the transformations $T^\varphi_X$? The idea is to consider fixed points of a certain type. Let us first observe that $T^\varphi_X(Y)\cong Y$ cannot hold for any well-orders $X$ and $Y$: If the latter have order types $\alpha$ resp.~$\beta$, then $T^\varphi_X(Y)$ has order type $1+\beta\cdot(\beta+\alpha)>\beta$. The best we can hope for is an ``almost'' order preserving function
\begin{equation*}
\vartheta:T^\varphi_X(Y)\rightarrow Y.
\end{equation*}
To make this precise we need some terminology: A transformation $Y\mapsto T(Y)$ of linear orders is called inclusive if $T(Y_0)$ is a suborder of $T(Y)$ whenever $Y_0$ is a suborder of $Y$. This property allows us to introduce the following notion:

\begin{definition}\label{def:supports}
 Let $Y\mapsto T(Y)$ be an inclusive transformation of orders. Given any order $Y$, we define the support $\supp^T_Y(\sigma)\subseteq Y$ of an element $\sigma\in T(Y)$ by
\begin{equation*}
 \supp^T_Y(\sigma)=\bigcap\{Y_0\subseteq Y\,|\,\sigma\in T(Y_0)\}.
\end{equation*}
\end{definition}
For the above transformations $T^\varphi_X$, the supports have a concrete description: The element of the summand $1$ has empty support. The support of an element $\langle\langle 0,y\rangle,y'\rangle$ resp.~$\langle\langle 1,x\rangle,y'\rangle$ in the other summand is equal to $\{y,y'\}$ resp.~$\{y'\}$. We can now say what we mean by an ``almost'' order preserving function:
\begin{definition}\label{def:collapse}
 Consider an inclusive transformation $Y\mapsto T(Y)$ of linear orders. A function $\vartheta:T(Y)\rightarrow Y$ is called a Bachmann-Howard collapse if the following holds for all $\sigma,\tau\in T(Y)$:
 \begin{enumerate}[label=(\roman*)]
  \item $\sigma<_{T(Y)}\tau$ implies $\vartheta(\sigma)<_Y\vartheta(\tau)$, under the side condition that $y<_Y\vartheta(\tau)$ holds for all $y\in\supp^T_Y(\sigma)$,
  \item we have $y<_Y\vartheta(\sigma)$ for all $y\in\supp^T_Y(\sigma)$.
 \end{enumerate}
 An order~$Y$ that admits such a function is called a Bachmann-Howard fixed point of~$T$. If $Y$ can be embedded into any other Bachmann-Howard fixed point of~$T$, then it is called a minimal Bachmann-Howard fixed point.
\end{definition}
In Remark~\ref{rmk:minimal-initial} we will discuss a stronger notion of minimality, which may be more appealing from a categorical standpoint. We can now state our characterization of the transformation $X\mapsto\varphi (1+X)0$, which will be proved in Section~\ref{sect:veblen}.

\begin{theorem}[$\rca_0$]\label{thm:veblen-collapsing}
The order $\varphi(1+X)0$ is a minimal Bachmann-Howard fixed point of the transformation $Y\mapsto 1+(Y+X)\times Y$, for any linear order~$X$.
\end{theorem}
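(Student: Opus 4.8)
The statement splits into two parts: (A) that $\varphi(1+X)0$ carries a Bachmann--Howard collapse for the transformation $T:=T^\varphi_X$, so that it is a Bachmann--Howard fixed point, and (B) that $\varphi(1+X)0$ order-embeds into any other such fixed point. I work in $\rca_0$ throughout, write $\mathbb V$ for the linear order $\varphi(1+X)0$, and rely on the usual presentation of $\mathbb V$ as a primitive-recursive linear order (uniformly in~$X$) with a $\Delta^0_1$ notion of normal form: every nonzero element of~$\mathbb V$ is uniquely a sum $h+\sigma$, where $h$ is the leading additively indecomposable summand and $\sigma<_{\mathbb V}h+\sigma$ is the remainder, and every additively indecomposable element is uniquely of the form $\varphi_a(b)$ in normal form, with $a\in 1+X$, $b\in\mathbb V$ and $b<_{\mathbb V}\varphi_a(b)$; moreover $\mathbb V$ is closed under $+$, under $c\mapsto\omega^c$, and under $\varphi_{1+x}$ for $x\in X$, and normal-form comparison together with these operations is available in~$\rca_0$ (cf.~\cite{rathjen-weiermann-atr}). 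I also use the explicit description of $\supp^T_Y$ recorded after Definition~\ref{def:supports}, and the fact that $T$ is inclusive. Since $X$ is an arbitrary linear order, no well-foundedness is available, so every recursion and induction below runs on the syntactic size of notations, which keeps the argument inside~$\rca_0$.

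For~(A), a natural choice of collapse $\vartheta\colon T(\mathbb V)\to\mathbb V$ follows the support structure: the element of the summand~$1$ is sent to~$0$; an element $\langle\langle 1,x\rangle,y'\rangle$ (support $\{y'\}$, $x\in X$) is sent to $\varphi_{1+x}(y'+1)$; and an element $\langle\langle 0,y\rangle,y'\rangle$ (support $\{y,y'\}$) is sent to $\omega^{y+1}+y'$. The closure properties guarantee that these are genuine elements of~$\mathbb V$, and clause~(ii) of Definition~\ref{def:collapse} is immediate (the shifts by~$+1$ ensure $\omega^{y+1}>_{\mathbb V}y$ and $\varphi_{1+x}(y'+1)>_{\mathbb V}y'$ even at fixed points of the relevant normal functions). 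For clause~(i) one runs through the shapes of a pair $\sigma<_{T(\mathbb V)}\tau$: pairs in which $\sigma$ is the element of the summand~$1$ are trivial, pairs with $\tau=\langle\langle 0,z\rangle,z'\rangle$ and $\sigma=\langle\langle 1,x\rangle,y'\rangle$ cannot occur because the $Y$-part precedes the $X$-part in $Y+X$, and the remaining cases reduce, by the lexicographic definition of $<_{T(\mathbb V)}$ and normal-form comparison in~$\mathbb V$, to short ordinal inequalities. In each of these the side condition ``$z<_{\mathbb V}\vartheta(\tau)$ for all $z\in\supp^T_{\mathbb V}(\sigma)$'' is precisely what is needed: e.g.\ for $\sigma=\langle\langle 0,y\rangle,y'\rangle<_{T(\mathbb V)}\langle\langle 0,z\rangle,z'\rangle=\tau$ with $y<_{\mathbb V}z$, either the leading exponent of $y'$ is larger than $y+1$, whence $\omega^{y+1}+y'=y'<_{\mathbb V}\vartheta(\tau)$ by the side condition, or it is not, whence $\omega^{y+1}+y'<_{\mathbb V}\omega^{y+2}\leq_{\mathbb V}\omega^{z+1}\leq_{\mathbb V}\vartheta(\tau)$; the case $y=z$ is handled by left cancellation of~$+$; and for $\sigma=\langle\langle 0,y\rangle,y'\rangle<_{T(\mathbb V)}\langle\langle 1,x\rangle,z'\rangle=\tau$ one uses that $\vartheta(\tau)=\varphi_{1+x}(z'+1)$ is additively principal and closed under $\omega^{(\,\cdot\,)}$ together with $y,y'<_{\mathbb V}\vartheta(\tau)$.

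For~(B), let $(Z,\vartheta_Z)$ be an arbitrary Bachmann--Howard fixed point of~$T$. I would define $e\colon\mathbb V\to Z$ by recursion on notations, evaluating the building blocks of~$\mathbb V$ in~$Z$ via~$\vartheta_Z$: put $e(0):=\vartheta_Z(*)$ with $*$ the element of the summand~$1$ of $T(Z)$, and for a nonzero $s=h+\sigma$ with $h=\varphi_a(b)$ set $e(s):=\vartheta_Z(\rho_s)$, where $\rho_s\in T(Z)$ is the element whose shape is dictated by the value of~$a$ (using the $X$-part of $Z+X$ when $a>0$ and $\sigma=0$, and otherwise the $Z$-part, carrying either $e(b)$ or, when $a>0$ and $\sigma\neq 0$, the already-computed $e(h)$) and whose trailing component records~$e(\sigma)$; the support of $\rho_s$ then consists of $e$-images of notations properly below~$s$ in the subterm ordering, so the recursion is well founded. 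One then proves ``$s<_{\mathbb V}t\Rightarrow e(s)<_Z e(t)$'' by a suitable induction, carrying along the auxiliary statement ``$e(u)<_Z e(t)$ for every subterm~$u$ of~$t$''. The point is that the Veblen normal-form comparison $s<_{\mathbb V}t$ leaves one in one of two situations: either $\rho_s<_{T(Z)}\rho_t$, in which case clause~(i) of~$\vartheta_Z$ applies --- its side condition being supplied exactly by the auxiliary statement --- or $s$ is already $\leq_{\mathbb V}$ one of the parameters recorded in~$\rho_t$, in which case $e(s)<_Z e(t)$ follows from clause~(ii) of~$\vartheta_Z$ and the induction hypothesis applied to that parameter; clause~(ii) is also what keeps the auxiliary statement going at steps introducing a fresh~$\vartheta_Z$. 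Being order-preserving on a linear order, $e$ is injective, which yields the embedding; together with~(A) this makes $\mathbb V$ a minimal Bachmann--Howard fixed point of~$T$.

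The main obstacle is the bookkeeping in~(B): order-preservation and the ``subterm domination'' auxiliary are not separately inductive, so they must be run together; one has to verify that $s<_{\mathbb V}t$ always places one in one of the two situations above (this is where the recursive rules of the Veblen normal-form comparison are genuinely used, including the somewhat delicate clause realizing a tail of an additively indecomposable $\varphi_a(b)$ with $a\geq 1$ through the $Z$-part of $Z+X$); and one must choose the induction measure so that every case closes. A secondary, already visible point is the $+1$-shift in~(A), without which clause~(ii) would fail at fixed points of the normal functions involved; this, together with the need to argue case by case about normal forms, is why the construction is carried out on the Veblen notation system rather than abstractly. None of this uses more than primitive recursion and $\Sigma^0_1$-induction, so everything remains within~$\rca_0$.
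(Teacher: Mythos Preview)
Your part~(A) contains a genuine error. The collapse $\vartheta(\langle\langle 0,y\rangle,y'\rangle)=\omega^{y+1}+y'$ does \emph{not} satisfy clause~(ii) of Definition~\ref{def:collapse}: since the support of $\langle\langle 0,y\rangle,y'\rangle$ is $\{y,y'\}$, you need $y'<_{\mathbb V}\omega^{y+1}+y'$. But you yourself observe, in your clause~(i) discussion, that when the leading Cantor exponent of $y'$ exceeds $y+1$ one has $\omega^{y+1}+y'=y'$; in that situation clause~(ii) demands $y'<_{\mathbb V}y'$. (Concretely, take $y=0$ and $y'=\varphi_\bot(\varphi_\bot 0+\varphi_\bot 0)$, i.e.\ $y'=\omega^2$.) The paper's collapse on the $Y^2$-part is $\vartheta(\langle\alpha,\beta\rangle)=\omega^{\omega^{\alpha+1}}\cdot(\beta+1)$: placing $\beta$ inside a \emph{product} rather than a sum guarantees $\beta<\beta+1\leq\omega_2(\alpha+1)\cdot(\beta+1)$ regardless of how large $\beta$ is relative to~$\alpha$, and the remaining verifications then go through as in Theorems~\ref{thm:char-exp} and~\ref{thm:eps-fp}.

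Your part~(B) has the right shape and is close to the paper's argument, but the description of $\rho_s$ is garbled. For $s=\varphi_{1+x}(b)$ you have $h=s$ and $\sigma=0$, and you say the trailing $Y$-component of $\rho_s$ is $e(\sigma)=e(0)$; that would make $e(\varphi_{1+x}(b))$ independent of~$b$, which cannot give an embedding. What is needed --- and what the paper does --- is the two-clause recursion
\[
f(\varphi_{1+x}\gamma)=\vartheta_Z(\langle x,f(\gamma)\rangle),\qquad
f(\omega^\alpha+\beta)=\vartheta_Z(\langle f(\alpha),f(\beta)\rangle)\ \text{for }\alpha,\beta<\omega^\alpha+\beta,
\]
where the second clause uses the specific decomposition of Lemma~\ref{lem:phi-nf} (which pulls out the leading \emph{exponent}~$\alpha$, not the leading additively indecomposable summand~$h$). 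With this definition the induction on $L^\varphi_X(\eta)+L^\varphi_X(\xi)$ goes through cleanly, case-splitting on~(\ref{eq:veblen}); no auxiliary ``subterm domination'' statement is needed beyond Lemma~\ref{lem:subterms-ineq}.
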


Due to minimality, a descending sequence in $\varphi(1+X)0$ propagates to any Bachmann-Howard fixed point of~$T^\varphi_X$. Hence $\varphi(1+X)0$ is well-founded if, and only if, the transformation $T^\varphi_X$ has a well-founded Bachmann-Howard fixed point. Together with Theorem~\ref{thm:friedman-atr} we obtain the following:

\begin{samepage}
\begin{corollary}\label{cor:exp-atr}
The following principles are equivalent over $\rca_0$:
 \begin{enumerate}[label=(\roman*)]
  \item arithmetical transfinite recursion,
  \item for every well-order~$X$ the transformation $Y\mapsto 1+(Y+X)\times Y$ has a well-founded Bachmann-Howard fixed point.
 \end{enumerate}
\end{corollary}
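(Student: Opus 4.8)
The plan is to obtain Corollary~\ref{cor:exp-atr} as an essentially formal consequence of Theorems~\ref{thm:friedman-atr} and~\ref{thm:veblen-collapsing}, so that all the genuine work is pushed into the latter (and into the cited result). The only auxiliary ingredient I would isolate first is the elementary transfer of well-foundedness along strictly order preserving maps: if $f\colon A\to B$ satisfies $a<_A a'\Rightarrow f(a)<_B f(a')$ and $B$ is well-founded, then $A$ is well-founded, because any infinite $<_A$-descending sequence $\langle a_n\rangle_{n\in\mathbb N}$ would yield the infinite $<_B$-descending sequence $\langle f(a_n)\rangle_{n\in\mathbb N}$. This argument is available in $\rca_0$, and in particular it applies to the embeddings provided by the minimality clause of Definition~\ref{def:collapse}, which are strictly order preserving injections.

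For the direction (i)$\Rightarrow$(ii) I would argue as follows. Assume arithmetical transfinite recursion and fix a well-order~$X$. By Theorem~\ref{thm:friedman-atr} the relativized notation system $\varphi(1+X)0$ is well-founded. By Theorem~\ref{thm:veblen-collapsing}, which is proved in $\rca_0$, this same order is a Bachmann-Howard fixed point of $T^\varphi_X$. Hence $\varphi(1+X)0$ is itself a well-founded Bachmann-Howard fixed point of $T^\varphi_X$, and since $X$ was arbitrary this is precisely statement~(ii).

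For the converse (ii)$\Rightarrow$(i) I would fix a well-order~$X$ and use~(ii) to obtain some well-founded Bachmann-Howard fixed point~$Y$ of $T^\varphi_X$. By Theorem~\ref{thm:veblen-collapsing} the order $\varphi(1+X)0$ is a \emph{minimal} Bachmann-Howard fixed point of $T^\varphi_X$, so it embeds into~$Y$; by the transfer lemma of the first paragraph, $\varphi(1+X)0$ is therefore well-founded. As $X$ ranges over all well-orders, Theorem~\ref{thm:friedman-atr} now delivers arithmetical transfinite recursion, which completes the equivalence.

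I do not expect a substantial obstacle at this stage: the mathematical content of the corollary is entirely contained in Theorem~\ref{thm:veblen-collapsing}, whose proof is deferred to Section~\ref{sect:veblen}. The only points requiring care are bookkeeping ones — checking that ``embeds into'' in Definition~\ref{def:collapse} really means a strictly order preserving injection, so that the transfer lemma applies, and confirming that the appeals to Theorem~\ref{thm:veblen-collapsing} (and to the elementary transfer of well-foundedness) stay within $\rca_0$, so that the stated base theory is not silently exceeded.
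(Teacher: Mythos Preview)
Your proposal is correct and follows essentially the same route as the paper: combine Theorem~\ref{thm:veblen-collapsing} with Theorem~\ref{thm:friedman-atr}, using minimality to transfer well-foundedness along the resulting embedding. The paper's own argument (given in the paragraph preceding the corollary) is just a terser version of exactly this, phrased as ``a descending sequence in $\varphi(1+X)0$ propagates to any Bachmann-Howard fixed point''.
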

\end{samepage}

As mentioned above, the literature contains several results that have the same form as Theorem~\ref{thm:friedman-atr}. In each line of the following table, the principle in the left column is equivalent to the assertion that the transformation in the middle column preserves well-foundedness (so the third line is Theorem~\ref{thm:friedman-atr}). Precise definitions and proofs can be found in the references that are given in the right column.
{\def\arraystretch{1.75}\tabcolsep=16pt
\setlength{\LTpre}{\baselineskip}\setlength{\LTpost}{\baselineskip}
 \begin{longtable}{lll}\hline
  arithmetical comprehension & $X\mapsto\omega^X$ & \cite{girard87,hirst94}\\ \hline
  the $\omega$-jump of every set exists & $X\mapsto\varepsilon_X$ & \cite{marcone-montalban,rathjen-afshari}\\ \hline
  arithmetical transfinite recursion & $X\mapsto\varphi (1+X)0$ & \cite{rathjen-weiermann-atr}\\ \hline
  every set lies in an $\omega$-model of $\atr$ & $X\mapsto\Gamma_X$ & \cite{rathjen-atr}\\ \hline
 \end{longtable}}
 Note that the existence of $\omega$-jumps is equivalent to the statement that every set lies in a (countable coded) $\omega$-model of $\aca$, over the base theory $\aca_0$ (see~\cite[Lemma~3.4]{rathjen-afshari}; $\omega$-models are explained in~\cite[Section~7.2]{simpson09}). We will characterize all transformations from the previous table in terms of collapsing principles. In each line of the next table, the order in the left column is a minimal Bachmann-Howard fixed point of the transformation in the middle column, for any linear order~$X$. The right column refers to the corresponding theorem of the present paper.
{\def\arraystretch{1.75}\tabcolsep=16pt
\setlength{\LTpre}{\baselineskip}\setlength{\LTpost}{\baselineskip}
 \begin{longtable}{lll}\hline
  $\omega^{\omega^X}$ & $Y\mapsto T^\omega_X(Y):=1+(1+X)\times Y$ & Theorem~\ref{thm:char-exp}\\ \hline
  $\varepsilon_X$ & $Y\mapsto T^\varepsilon_X(Y):=1+Y^2+X$ & Theorem~\ref{thm:eps-fp}\\ \hline
  $\varphi(1+X)0$ & $Y\mapsto T^\varphi_X(Y):=1+(Y+X)\times Y$ & Theorem~\ref{thm:veblen-collapsing}\\ \hline
  $\Gamma_X$ & $Y\mapsto T^\Gamma_X(Y):=1+2\times Y^2+X$ & Theorem~\ref{thm:Gamma}\\ \hline
 \end{longtable}}
 As in Corollary~\ref{cor:exp-atr}, we obtain new characterizations of the (broadly) predicative principles from above: arithmetical comprehension (Corollary~\ref{cor:arith-comp-collapsing}), the existence of $\omega$-jumps (Corollary~\ref{cor:omega-jumps}), and the existence of $\omega$-models of $\atr$ (Corollary~\ref{cor:model-atr}).

In the rest of this introduction we explain the wider context of our results: Let us first recall that J.-Y.~Girard~\cite{girard-pi2} has singled out a class of particularly uniform well-ordering principles, which are known as dilators. More precisely, a dilator is an endofunctor on the category of well-orders that preserves direct limits and pullbacks. In the inclusive case, these requirements correspond to the following properties of the supports from Definition~\ref{def:supports} (cf.~\cite[Remark~2.2.2]{freund-thesis}):
\begin{itemize}
 \item each support $\supp^T_Y(\sigma)\subseteq Y$ is finite,
 \item we have $\sigma\in T(\supp^T_Y(\sigma))$ for any $\sigma\in T(Y)$.
\end{itemize}
Girard has shown that dilators are determined by their restrictions to the category of natural numbers (up to natural equivalence). This crucial property makes it possible to represent dilators in second order arithmetic. For the purpose of the present paper we do not need this general representation, since our families of order transformations come with an explicit parametrization.
 
The notion of Bachmann-Howard fixed point has been introduced in~\cite{freund-thesis,freund-equivalence}, for arbitrary (i.\,e.~not necessarily inclusive) dilators. In the cited papers it was shown that \mbox{$\Pi^1_1$-comprehension} is equivalent to the statement that every dilator has a well-founded Bachmann-Howard fixed point. Furthermore, a minimal Bachmann-Howard fixed point of a given dilator can already be constructed in~$\rca_0$, as shown in~\cite{freund-categorical,freund-computable}. Due to its minimality, that fixed point must be well-founded, but $\rca_0$ cannot prove this fact. Applied to Corollary~\ref{cor:exp-atr}, this confirms that the strength of statement~(ii) does not lie in the existence of a Bachmann-Howard fixed point as such, but rather in its well-foundedness.

The name ``Bachmann-Howard fixed point" refers to the fact that our definitions are inspired by the Bachmann-Howard ordinal, in particular by the notation system from~\cite[Section~1]{rathjen-weiermann-kruskal}. It is well-known that values of the Veblen function also arise in the construction of the Bachmann-Howard ordinal (see e.\,g.~\cite{buchholz-bachmann}). For this reason a result such as Theorem~\ref{thm:veblen-collapsing} may not be entirely unexpected. Nevertheless, it seems that the connection on the level of predicative well-ordering principles has not been made before. The literature does contain an impredicative well-ordering principle that is related to the Bachmann-Howard ordinal: As shown by M.~Rathjen and P.~Valencia Vizca\'ino~\cite{rathjen-model-bi}, the statement that every set lies in an $\omega$-model of bar induction is equivalent to the principle that a relativized notation system $\vartheta_X$ is well-founded for any well-order~$X$. In contrast to our approach, the notation system $\vartheta_X$ incorporates the collapsing function into the term structure.

In the present paper we are concerned with ``almost" order preserving collapsing functions of transformations that do not have well-founded fixed points in the usual sense. A class of transformations that correspond to normal functions has been singled out by P.~Aczel~\cite{aczel-phd,aczel-normal-functors}: these transformations have well-founded fixed points of arbitrarily large order type. In~\cite{freund-rathjen_derivatives,freund_single-fp} it was shown that an appropriate formalization of the statement that ``every normal function has a derivative (resp.~at least one fixed point)" is equivalent to $\Pi^1_1$-induction along arbitrary well-orders (resp.~along~$\mathbb N$). These induction principles are considerably weaker than the principle of $\Pi^1_1$-comprehension, which is equivalent to the existence of well-founded Bachmann-Howard fixed points. The present paper appears to show that the great strength of Bachmann-Howard fixed points translates into particularly simple characterizations of weaker principles.

\section{Collapsing and ordinal exponentiation}\label{sect:exp}

In the present section we show how the orders $\omega^{\omega^X}$ and $\varepsilon_X$ can be constructed as Bachmann-Howard fixed points. As mentioned in the introduction, this yields characterizations of arithmetical comprehension and the principle that the $\omega$-jump of every set exists.

Let us recall some definitions: Given an order $X=(X,\leq_X)$, the underlying set
\begin{equation*}
\omega^X=\{\langle x_0,\dots,x_{n-1}\rangle\,|\,x_{n-1}\leq_X\dots\leq_X x_0\}
\end{equation*}
of the order $\omega^X=(\omega^X,\leq_{\omega^X})$ consists of the finite decreasing sequences with entries from~$X$. The relation $\leq_{\omega^X}$ is defined as the lexicographic order on this set (cf.~\cite[Definition~2.2]{hirst94}). Intuitively, the elements of $\omega^X$ correspond to Cantor normal forms. To convey this intuition we will write $\omega^{x_0}+\dots+\omega^{x_{n-1}}$ rather than~$\langle x_0,\dots,x_{n-1}\rangle$, and in particular $0$ rather than $\langle\rangle\in\omega^X$. If $X$ is a well-order of type~$\alpha$, then $\omega^X$ has order type~$\omega^\alpha$, in the usual sense of ordinal arithmetic.

Addition on $\omega^X$ can be defined in terms of Cantor normal forms: We agree that~$0$ is neutral and that we have
\begin{equation*}
(\omega^{x_0}+\dots+\omega^{x_n})+(\omega^{y_0}+\dots+\omega^{y_m})=\omega^{x_0}+\dots+\omega^{x_i}+\omega^{y_0}+\dots+\omega^{y_m},
\end{equation*}
where $i$ is maximal with $y_0\leq_X x_i$ (note $i=-1$ if $x_0<_X y_0$). It is well-known that basic properties of ordinal addition can be proved in $\rca_0$ (cf.~e.\,g.~\cite{schuette77,sommer95}).

In order to define multiplication we must consider $\omega^{\omega^X}$ rather than $\omega^X$ (note that an ordinal of the form $\omega^\alpha$ does not need to be multiplicatively principal). The general definition of multiplication in terms of Cantor normal forms is somewhat cumbersome, since ordinal arithmetic does not validate right distributivity. Luckily, we will only need to multiply terms of a particular form: Given elements $\alpha\in\omega^X$ and $\beta_{n-1}\leq_{\omega^X}\dots\leq_{\omega^X}\beta_0$, we can set
\begin{equation*}
\omega^\alpha\cdot(\omega^{\beta_0}+\dots+\omega^{\beta_{n-1}})=\omega^{\alpha+\beta_0}+\dots+\omega^{\alpha+\beta_{n-1}},
\end{equation*}
where the exponents are added in~$\omega^X$. Since $\omega^X$ contains a minimal element $0=\langle\rangle$, the order $\omega^{\omega^X}$ contains a minimal non-zero element $1=\omega^0$. This allows us to distinguish between successor and limit elements. Again, basic properties of these notions can be proved in $\rca_0$. To avoid iterated superscripts we will abbreviate $\omega_2(X):=\omega^{\omega^X}$, as well as $\omega_2(x)=:\omega^{\omega^x}\in\omega_2(X)$ for $x\in X$.

On an intuitive level one would like to prove certain statements by induction along the order $\leq_{\omega_2(X)}$, but this induction principle is not available in our setting. Instead we argue by induction over the length of terms. For this purpose we define functions \mbox{$l^\omega_X:\omega^X\rightarrow\mathbb N$} and $L^\omega_X:\omega_2(X)\rightarrow\mathbb N$ by setting
\begin{align*}
l^\omega_X(\omega^{x_0}+\dots+\omega^{x_{n-1}})&=n,\\
L^\omega_X(\omega^{\alpha_0}+\dots+\omega^{\alpha_{n-1}})&=l^\omega_X(\alpha_0)+\dots+l^\omega_X(\alpha_{n-1})+n.
\end{align*}
The following observation will be crucial for our analysis of the order $\omega_2(X)$.
\begin{lemma}[$\rca_0$]\label{lem:decompose-omega2x}
Let $X$ be a linear order. Any limit element of $\omega_2(X)$ can be uniquely written as $\omega_2(x)\cdot\eta$ with $0<_{\omega_2(X)}\eta<_{\omega_2(X)}\omega_2(x)\cdot\eta$. Furthermore we have $L^\omega_X(\eta)<L^\omega_X(\omega_2(x)\cdot\eta)$ for any such decomposition.
\end{lemma}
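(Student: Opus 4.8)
The plan is to argue directly with the representations by ``Cantor normal forms'' that the paper has set up. Write a given element of $\omega_2(X)$ as $\xi=\omega^{\delta_0}+\dots+\omega^{\delta_{n-1}}$ with $\delta_{n-1}\leq_{\omega^X}\dots\leq_{\omega^X}\delta_0$. Since $1=\omega^0$ and since $\zeta+1=\zeta+\omega^0$ simply appends a final $\omega^0$ (as $0$ is $\leq_{\omega^X}$-minimal), one sees in $\rca_0$ that $\xi$ is a successor exactly when $\delta_{n-1}=0$; hence the limit elements are precisely those with $n\geq 1$ and $\delta_{n-1}>_{\omega^X}0$. Fix such a $\xi$ and let $x\in X$ be the first (equivalently, largest) entry of the nonempty decreasing sequence $\delta_{n-1}$.

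For existence I would first record the elementary fact that $\delta_i\leq_{\omega^X}\delta_j$ implies that the first entry of $\delta_i$ is $\leq_X$ the first entry of $\delta_j$ (otherwise the lexicographic comparison already disagrees at position $0$); since all $\delta_i$ are $\geq_{\omega^X}\delta_{n-1}>_{\omega^X}0$, their first entries are all $\geq_X x$. Hence each $\delta_i$ has a canonical presentation $\delta_i=\omega^x+\gamma_i$ with $\gamma_i\in\omega^X$: if the first entry of $\delta_i$ strictly exceeds $x$, the addition rule absorbs the summand $\omega^x$ and we put $\gamma_i=\delta_i$; if that first entry equals $x$, we let $\gamma_i$ be $\delta_i$ with its leading term $\omega^x$ removed (possibly $\gamma_i=0$). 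A short case analysis on whether the first entries of consecutive $\delta_i,\delta_{i+1}$ equal or exceed $x$ --- using that these first entries decrease weakly with $i$, so the configuration ``first entry of $\delta_i$ equals $x$ while that of $\delta_{i+1}$ exceeds $x$'' never occurs --- shows $\gamma_0\geq_{\omega^X}\dots\geq_{\omega^X}\gamma_{n-1}$, so $\eta:=\omega^{\gamma_0}+\dots+\omega^{\gamma_{n-1}}$ is a well-formed element of $\omega_2(X)$, and $\eta>_{\omega_2(X)}0$ because $n\geq 1$. The multiplication formula then gives $\omega_2(x)\cdot\eta=\omega^{\omega^x+\gamma_0}+\dots+\omega^{\omega^x+\gamma_{n-1}}=\xi$. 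Finally $\eta<_{\omega_2(X)}\xi$: the first entry of $\delta_{n-1}$ equals $x$ by the choice of $x$, so $i=n-1$ falls in the ``leading term removed'' case and $\gamma_{n-1}<_{\omega^X}\delta_{n-1}=\omega^x+\gamma_{n-1}$; comparing $\eta$ with $\xi$ term by term (where $\gamma_i\leq_{\omega^X}\omega^x+\gamma_i$ throughout) then yields $\eta<_{\omega_2(X)}\xi$.

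For uniqueness, let $\xi=\omega_2(x')\cdot\eta'$ with $0<_{\omega_2(X)}\eta'<_{\omega_2(X)}\xi$, say $\eta'=\omega^{\gamma'_0}+\dots+\omega^{\gamma'_{m-1}}$. The multiplication formula forces $m=n$ and $\delta_i=\omega^{x'}+\gamma'_i$ for each $i$, so the first entry of $\delta_{n-1}$ is $\geq_X x'$, i.e.\ $x'\leq_X x$. If $x'<_X x$, then for every $i$ the first entry of $\delta_i$ is $\geq_X x>_X x'$, and since a prepended $\omega^{x'}$ in $\omega^{x'}+\gamma'_i$ would force that first entry to equal $x'$, the summand $\omega^{x'}$ must instead be absorbed, whence $\gamma'_i=\delta_i$ for all $i$ and $\eta'=\xi$ --- contradicting $\eta'<_{\omega_2(X)}\xi$. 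Thus $x'=x$; and given $x$, the element $\gamma'_i$ is recovered from $\delta_i=\omega^x+\gamma'_i$ by inspecting whether the first entry of $\delta_i$ equals or exceeds $x$, exactly as for $\gamma_i$, so $\gamma'_i=\gamma_i$ and $\eta'=\eta$. For the displayed inequality on $L^\omega_X$, observe that $l^\omega_X(\omega^x+\gamma_i)$ equals $l^\omega_X(\gamma_i)$ in the absorbed case and $l^\omega_X(\gamma_i)+1$ in the other case, and that $i=n-1$ is always of the latter kind (the first entry of $\delta_{n-1}$ being $x$); summing the defining formula for $L^\omega_X$ over the $n$ terms yields $L^\omega_X(\eta)<L^\omega_X(\xi)=L^\omega_X(\omega_2(x)\cdot\eta)$.

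The one step that needs genuine care is the verification that $\gamma_0\geq_{\omega^X}\dots\geq_{\omega^X}\gamma_{n-1}$: one must treat each consecutive pair $\delta_i\geq_{\omega^X}\delta_{i+1}$ according to whether its two first entries equal or strictly exceed $x$, the only asymmetric situation being ``first entry of $\delta_i$ exceeds $x$, first entry of $\delta_{i+1}$ equals $x$'', which is resolved by $\gamma_i=\delta_i\geq_{\omega^X}\delta_{i+1}>_{\omega^X}\gamma_{i+1}$, while the opposite asymmetric situation is ruled out by the weak monotonicity of first entries noted above. Everything else amounts to the routine arithmetic of Cantor normal forms that the paper has already placed in $\rca_0$.
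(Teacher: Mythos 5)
Your proof is correct and follows essentially the same route as the paper: both isolate $x$ as the leading entry of the last exponent of the limit element and obtain $\eta$ by left subtraction of $\omega^x$ from each exponent, with the length estimate coming from the strict drop at the final position. The only cosmetic difference is in the uniqueness step, where the paper derives the contradiction algebraically from the absorption identity $\omega_2(x)\cdot\omega_2(y)=\omega_2(y)$ for $x<_Xy$ together with the monotonicity of multiplication, while you read the same absorption fact off the Cantor normal forms directly.
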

\begin{proof}
To establish existence we consider an arbitrary limit element
\begin{equation*}
\omega^{\beta_0}+\dots+\omega^{\beta_n}\in\omega_2(X).
\end{equation*}
Since we are concerned with a limit, the last exponent is different from $0\in\omega^X$. Hence there are elements $x\in X$ and $\gamma_n\in\omega^X$ with
\begin{equation*}
\beta_n=\omega^x+\gamma_n>\gamma_n.
\end{equation*}
Let us also record $l^\omega_X(\gamma_n)<l^\omega_X(\beta_n)$. Left subtraction is readily defined on the level of Cantor normal forms. In view of $\omega^x\leq_{\omega^X}\beta_n\leq_{\omega^X}\dots\leq_{\omega^X}\beta_0$ we can thus write
\begin{equation*}
\beta_i=\omega^x+\gamma_i
\end{equation*}
for all $i<n$. One readily checks $\gamma_i\leq_{\omega^X}\beta_i$ and $l^\omega_X(\gamma_i)\leq l^\omega_X(\beta_i)$ (note that the inequalities may not be strict for~$i<n$). Due to the monotonicity of addition we must also have $\gamma_n\leq_{\omega^X}\dots\leq_{\omega^X}\gamma_0$. We can thus define
\begin{equation*}
\eta:=\omega^{\gamma_0}+\dots+\omega^{\gamma_n}\in\omega_2(X).
\end{equation*}
By construction we have
\begin{equation*}
\omega_2(x)\cdot\eta=\omega^{\omega^x+\gamma_0}+\dots+\omega^{\omega^x+\gamma_n}=\omega^{\beta_0}+\dots+\omega^{\beta_n}.
\end{equation*}
The above inequalities between $\beta_i$ and $\gamma_i$ imply
\begin{equation*}
0<_{\omega_2(X)}\eta<_{\omega_2(X)}\omega^{\beta_0}+\dots+\omega^{\beta_n}=\omega_2(x)\cdot\eta.
\end{equation*}
In view of  $l^\omega_X(\gamma_n)<l^\omega_X(\beta_n)$ we also get
\begin{equation*}
L^\omega_X(\eta)=l^\omega_X(\gamma_0)+\dots+l^\omega_X(\gamma_n)+n+1<l^\omega_X(\beta_0)+\dots+l^\omega_X(\beta_n)+n+1=L^\omega_X(\omega_2(x)\cdot\eta).
\end{equation*}
It remains to establish uniqueness. Due to the monotonicity of multiplication it suffices to show that
\begin{equation*}
\eta<_{\omega_2(X)}\omega_2(x)\cdot\eta=\omega_2(y)\cdot\xi>_{\omega_2(X)}\xi
\end{equation*}
implies $x=y$. Aiming at a contradiction, let us assume that we have $x<_X y$. Then we get $\omega^x+\omega^y=\omega^y$ and hence $\omega_2(x)\cdot\omega_2(y)=\omega_2(y)$. We can deduce
\begin{equation*}
\omega_2(x)\cdot\eta=\omega_2(y)\cdot\xi=\omega_2(x)\cdot\omega_2(y)\cdot\xi=\omega_2(x)\cdot\omega_2(x)\cdot\eta,
\end{equation*}
which is incompatible with the assumption $\eta<_{\omega_2(X)}\omega_2(x)\cdot\eta$.
\end{proof}

Our goal is to characterize $\omega_2(X)$ as a minimal Bachmann-Howard fixed point of the order transformation
\begin{equation*}
 Y\mapsto T^\omega_X(Y)=1+(1+X)\times Y.
\end{equation*}
Let us write $\bot$ for the unique element of $1$. The elements of $T^\omega_X(Y)$ will be written as $\bot$, $\langle\bot,y\rangle$ and $\langle 1+x,y\rangle$, rather than $\langle 0,\bot\rangle$, $\langle 1,\langle\langle 0,\bot\rangle,y\rangle\rangle$ and $\langle 1,\langle\langle 1,x\rangle,y\rangle\rangle$, respectively. Sometimes we also use $x$ to denote an arbitrary element of $1+X$. The supports from Definition~\ref{def:supports} take the forms
\begin{equation*}
 \supp^\omega_Y(\bot)=\emptyset\qquad\text{and}\qquad\supp^\omega_Y(\langle x,y\rangle)=\{y\}.
\end{equation*}
In view of Definition~\ref{def:collapse}, this means that a function $\vartheta:T^\omega_X(Y)\rightarrow Y$ is a Bachmann-Howard collapse if, and only if, the following conditions are satisfied:
\begin{enumerate}
 \item[(i)] we have $\vartheta(\bot)<_Y\vartheta(\langle x,y\rangle)$ for any $\langle x,y\rangle\in (1+X)\times Y$,
 \item[(i$'$)] $\langle x,y\rangle<_{(1+X)\times Y}\langle x',y'\rangle$ implies $\vartheta(\langle x,y\rangle)<_Y\vartheta(\langle x',y'\rangle)$, under the side condition that we have $y<_Y\vartheta(\langle x',y'\rangle)$,
 \item[(ii)] we have $y<_Y\vartheta(\langle x,y\rangle)$ for any $x\in 1+X$ and $y\in Y$.
\end{enumerate}
We can now establish the promised characterization, improving~\cite[Proposition~3.3]{freund-computable}:

\begin{theorem}[$\rca_0$]\label{thm:char-exp}
The order $\omega^{\omega^X}$ is a minimal Bachmann-Howard fixed point of the transformation $T^\omega_X$, for any order $X$.
\end{theorem}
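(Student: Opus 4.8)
The plan is to establish the two halves of the statement separately: that $\omega_2(X)$ carries a Bachmann-Howard collapse, hence is a Bachmann-Howard fixed point of $T^\omega_X$, and that it embeds into any other Bachmann-Howard fixed point. For the first half I would extend the notation $\omega_2(x)=\omega^{\omega^x}$ from $x\in X$ to $x\in 1+X$ by putting $\omega_2(\bot):=1=\omega^0$, so that $x\mapsto\omega_2(x)$ becomes a strictly increasing map $1+X\to\omega_2(X)$, and then define $\vartheta:T^\omega_X(\omega_2(X))\to\omega_2(X)$ by $\vartheta(\bot)=0$ and $\vartheta(\langle x,\eta\rangle)=\omega_2(x)\cdot(\eta+1)$, with sum and product computed in $\omega_2(X)$ as in Section~\ref{sect:exp}. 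These clauses are manifestly computable from $X$, so $\vartheta$ exists in $\rca_0$.

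Checking that $\vartheta$ is a Bachmann-Howard collapse comes down to conditions~(i), (i$'$) and~(ii) from the list just before the theorem. Condition~(ii), that $\eta<\omega_2(x)\cdot(\eta+1)$, follows from $\omega_2(x)\cdot\eta\geq\eta$ together with $\omega_2(x)>0$, and~(i) is then immediate. The only nontrivial case of~(i$'$) is $x<_{1+X}x'$, where necessarily $x'\in X$: the side condition $\eta<\omega_2(x')\cdot(\eta'+1)$ upgrades to $\eta+1<\omega_2(x')\cdot(\eta'+1)$ because the right-hand side is a limit, and left-multiplying by $\omega_2(x)$ (strictly monotone in the right factor) gives $\omega_2(x)\cdot(\eta+1)<\omega_2(x)\cdot\omega_2(x')\cdot(\eta'+1)=\omega_2(x')\cdot(\eta'+1)$, the final equality because $\omega_2(x)$ is absorbed by the larger factor $\omega_2(x')$. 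The subcase $x=x'$, $\eta<\eta'$ of~(i$'$) is a one-line computation. The ordinal-arithmetic facts used here---left distributivity and associativity of multiplication; strict monotonicity of $\delta\mapsto\gamma\cdot\delta$ and the bound $\gamma\cdot\delta\geq\delta$ for $\gamma\geq 1$; absorption; and that a nonzero ordinal times a limit is a limit---are all provable in $\rca_0$ on the notation systems of Section~\ref{sect:exp}.

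For minimality, fix any Bachmann-Howard fixed point $(Y,\vartheta)$ of $T^\omega_X$ and, ``arguing by induction over the length of terms'', define $f:\omega_2(X)\to Y$ by recursion on $L^\omega_X$: set $f(0)=\vartheta(\bot)$; for a successor $\zeta=\zeta_0+1$ set $f(\zeta)=\vartheta(\langle\bot,f(\zeta_0)\rangle)$, noting $L^\omega_X(\zeta_0)<L^\omega_X(\zeta)$; and for a limit $\zeta$ with the unique decomposition $\zeta=\omega_2(x)\cdot\eta$ from Lemma~\ref{lem:decompose-omega2x}, set $f(\zeta)=\vartheta(\langle 1+x,f(\eta)\rangle)$, noting $L^\omega_X(\eta)<L^\omega_X(\zeta)$ from that lemma. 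As before $f$ is computable from $X$, $Y$, $\vartheta$ and so exists in $\rca_0$. The crux is the claim that $\zeta<_{\omega_2(X)}\zeta'$ implies $f(\zeta)<_Y f(\zeta')$, which makes $f$ an order embedding; I would prove it by induction on $L^\omega_X(\zeta)+L^\omega_X(\zeta')$, by cases on the shapes of $\zeta,\zeta'$. If $\zeta'=\zeta'_0+1$ then $\zeta\leq\zeta'_0$, so $f(\zeta)\leq f(\zeta'_0)<f(\zeta')$ by the induction hypothesis and~(ii). If $\zeta'=\omega_2(x')\cdot\eta'$ is a limit and $\zeta$ is $0$, a successor, or a limit $\omega_2(x)\cdot\eta$ with $x<_X x'$, then the relevant elements of $T^\omega_X(Y)$ are ordered correctly and one concludes via~(i) or~(i$'$), the side condition of the latter being supplied by the induction hypothesis; the subcase $x=x'$ reduces to $\eta<_{\omega_2(X)}\eta'$ (order-reflection of left multiplication) and is treated the same way.

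The one genuinely delicate case is $\zeta=\omega_2(x)\cdot\eta$ and $\zeta'=\omega_2(x')\cdot\eta'$ both limits with $x>_X x'$: now the elements $\langle 1+x,f(\eta)\rangle$ and $\langle 1+x',f(\eta')\rangle$ of $T^\omega_X(Y)$ are ordered oppositely to $\zeta$ and $\zeta'$, so~(i$'$) is not directly applicable. The saving observation---where the absorption $\omega_2(x')\cdot\omega_2(x)=\omega_2(x)$ earns its keep---is that $\zeta\leq_{\omega_2(X)}\eta'$: if $\eta'<\omega_2(x)\cdot\eta$ held, left-multiplying by $\omega_2(x')$ would yield $\zeta'=\omega_2(x')\cdot\eta'<\omega_2(x')\cdot\omega_2(x)\cdot\eta=\zeta$, against $\zeta<\zeta'$. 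Granting $\zeta\leq\eta'$, the induction hypothesis applies to $(\zeta,\eta')$ because $L^\omega_X(\eta')<L^\omega_X(\zeta')$ by Lemma~\ref{lem:decompose-omega2x}, giving $f(\zeta)\leq f(\eta')$, and then $f(\eta')<f(\zeta')$ by~(ii). I expect this meshing of the decomposition lemma with the side-conditioned monotonicity from Definition~\ref{def:collapse} to be the main obstacle; everything else is routine ordinal arithmetic on notation systems.
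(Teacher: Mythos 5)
Your proposal is correct and follows essentially the same route as the paper: the same collapse (your uniform clause $\vartheta(\langle x,\eta\rangle)=\omega_2(x)\cdot(\eta+1)$ with $\omega_2(\bot)=1$ literally coincides with the paper's two clauses), the same recursive embedding based on the decomposition of Lemma~\ref{lem:decompose-omega2x}, and the same induction on $L^\omega_X(\zeta)+L^\omega_X(\zeta')$, including the key observation $\zeta\leq_{\omega_2(X)}\eta'$ in the case $x>_X x'$ via absorption. No gaps; the differences from the paper's proof are purely presentational.
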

\begin{proof}
In order to show that $\omega_2(X)=\omega^{\omega^X}$ is a Bachmann-Howard fixed point of~$T^\omega_X$ we must define a collapsing function
\begin{equation*}
\vartheta:1+(1+X)\times\omega_2(X)\rightarrow\omega_2(X).
\end{equation*}
Using the successor operation and multiplication in $\omega_2(X)$, we set
\begin{align*}
\vartheta(\bot)&:=0,\\
\vartheta(\langle \bot,\eta\rangle)&:=\eta+1,\\
\vartheta(\langle 1+x,\eta\rangle)&:=\omega_2(x)\cdot(\eta+1).
\end{align*}
The above condition~(i) is immediate. Condition~(ii) is satisfied in view of
\begin{equation*}
 \eta<_{\omega_2(X)}\eta+1\leq_{\omega_2(X)}\omega_2(x)\cdot(\eta+1).
\end{equation*}
To verify condition~(i$'$) one needs to distinguish several cases. In the first interesting case we are concerned with an inequality
\begin{equation*}
 \langle\bot,\eta\rangle<_{(1+X)\times Y}\langle 1+x,\eta'\rangle.
\end{equation*}
Due to the side condition in~(i$'$) we may assume
\begin{equation*}
 \eta<_{\omega_2(X)}\vartheta(\langle 1+x,\eta'\rangle)=\omega_2(x)\cdot(\eta'+1).
\end{equation*}
The element on the right side is a limit (note that the last exponent in its Cantor normal form is equal to $\omega^x\neq 0$). Hence we obtain
\begin{equation*}
 \vartheta(\langle\bot,\eta\rangle)=\eta+1<_{\omega_2(X)}\vartheta(\langle 1+x,\eta'\rangle),
\end{equation*}
as required. Let us also consider the case of an inequality
\begin{equation*}
 \langle 1+x,\eta\rangle<_{(1+X)\times Y}\langle 1+x',\eta'\rangle
\end{equation*}
with $x<_X x'$. Yet again, the side condition yields $\eta+1<_{\omega_2(X)}\omega_2(x')\cdot(\eta'+1)$. Also observe that $x<_X x'$ implies $\omega_2(x)\cdot\omega_2(x')=\omega_2(x')$, as in the proof of Lemma~\ref{lem:decompose-omega2x}. Using the monotonicity of multiplication we can deduce
\begin{align*}
 \vartheta(\langle 1+x,\eta\rangle)=\omega_2(x)\cdot(\eta+1)<_{\omega_2(X)}\omega_2(x)\cdot\omega_2(x')\cdot(\eta'+1)&=\\
 =\omega_2(x')\cdot(\eta'+1)&=\vartheta(\langle 1+x',\eta'\rangle).
\end{align*}
So far we have shown that $\omega_2(X)$ is a Bachmann-Howard fixed point of $T^\omega_X$. To establish minimality we consider an arbitrary order~$Y$ that admits a Bachmann-Howard collapse
\begin{equation*}
 \vartheta:1+(1+X)\times Y\rightarrow Y.
\end{equation*}
We need to construct an embedding $f:\omega_2(X)\rightarrow Y$. In view of Lemma~\ref{lem:decompose-omega2x} we can define $f$ by recursion over the length of terms, by setting
\begin{align*}
 f(0)&:=\vartheta(\bot),\\
 f(\eta+1)&:=\vartheta(\langle\bot,f(\eta)\rangle),\\
 f(\omega_2(x)\cdot\eta)&:=\vartheta(\langle 1+x,f(\eta)\rangle),\quad\text{where $0<_{\omega_2(X)}\eta<_{\omega_2(X)}\omega_2(x)\cdot\eta$}.
\end{align*}
To show that $\xi<_{\omega_2(X)}\xi'$ implies $f(\xi)<_Y f(\xi')$ we argue by induction on the combined length $L^\omega_X(\xi)+L^\omega_X(\xi')$ of $\xi$ and $\xi'$ (note that this amounts to an induction over a $\Pi^0_1$-statement, which is available in $\rca_0$). In the first interesting case we consider an inequality
\begin{equation*}
 \xi=\eta+1<_{\omega_2(X)}\omega_2(x)\cdot\eta'=\xi'.
\end{equation*}
In view of $\bot<_{1+X}1+x$ we clearly have $\langle\bot,f(\eta)\rangle<_{(1+X)\times Y}\langle 1+x,f(\eta')\rangle$. Invoking the induction hypothesis, we also see that $\eta<_{\omega_2(X)}\xi'$ implies
\begin{equation*}
 f(\eta)<_Y f(\xi')=\vartheta(\langle 1+x,f(\eta')\rangle).
\end{equation*}
This is the side condition required in clause~(i$'$) above. We can thus conclude
\begin{equation*}
 f(\xi)=\vartheta(\langle\bot,f(\eta)\rangle)<_Y\vartheta(\langle 1+x,f(\eta')\rangle)=f(\xi').
\end{equation*}
Let us now consider an inequality of the form
\begin{equation*}
 \xi=\omega_2(x)\cdot\eta<_{\omega_2(X)}\eta'+1=\xi'.
\end{equation*}
Using the induction hypothesis and clause~(ii) above we get
\begin{equation*}
 f(\xi)\leq_Y f(\eta')<_Y\vartheta(\langle\bot,f(\eta')\rangle)=f(\xi'),
\end{equation*}
as required. To conclude the proof we consider an inequality of the form
\begin{equation*}
 \xi=\omega_2(x)\cdot\eta<_{\omega_2(X)}\omega_2(x')\cdot\eta'=\xi'.
\end{equation*}
We need to distinguish three cases: First assume that we have $x<_X x'$. Then we immediately get $\langle 1+x,f(\eta)\rangle<_{(1+X)\times Y}\langle 1+x',f(\eta')\rangle$. In view of Lemma~\ref{lem:decompose-omega2x} we have $\eta<_{\omega_2(X)}\xi<_{\omega_2(X)}\xi'$ and $L^\omega_X(\eta)<L^\omega_X(\xi)$. Hence the induction hypothesis yields
\begin{equation*}
 f(\eta)<_Y f(\xi')=\vartheta(\langle 1+x',f(\eta')\rangle).
\end{equation*}
Hence the side condition from clause~(i$'$) is satisfied, and we obtain
\begin{equation*}
 f(\xi)=\vartheta(\langle 1+x,f(\eta)\rangle)<_Y\vartheta(\langle 1+x',f(\eta')\rangle)=f(\xi').
\end{equation*}
Now assume $x=x'$. In view of $\xi<_{\omega_2(X)}\xi'$ we must have $\eta<_{\omega_2(X)}\eta'$. Then the induction hypothesis yields $\langle 1+x,f(\eta)\rangle<_{(1+X)\times Y}\langle 1+x',f(\eta')\rangle$, and we can conclude as in the previous case. Finally assume $x>_X x'$. In this case we observe that
\begin{equation*}
 \omega_2(x')\cdot\xi=\xi<_{\omega_2(X)}\xi'=\omega_2(x')\cdot\eta'
\end{equation*}
implies $\xi<_{\omega_2(X)}\eta'$. Using the induction hypothesis and clause~(ii) we obtain
\begin{equation*}
 f(\xi)<_Y f(\eta')<_Y\vartheta(\langle 1+x',f(\eta')\rangle)=f(\xi'),
\end{equation*}
just as needed.
\end{proof}

The statement that $\omega^X$ is well-founded for every well-order~$X$ is equivalent to arithmetical comprehension, as shown by J.-Y.~Girard~\cite[Section~5.4]{girard87} (cf.~also the computability-theoretic proof by J.~Hirst~\cite{hirst94}). We can deduce the following:

\begin{corollary}\label{cor:arith-comp-collapsing}
The following are equivalent over $\rca_0$:
\begin{enumerate}[label=(\roman*)]
 \item arithmetical comprehension (which is the principal axiom of $\aca_0$),
 \item for every well-order~$X$ the transformation $Y\mapsto 1+X\times Y$ has a well-founded Bachmann-Howard fixed point.
\end{enumerate}
\end{corollary}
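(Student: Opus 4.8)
The plan is to derive the corollary from Theorem~\ref{thm:char-exp} together with the theorem of Girard and Hirst, recalled above, that arithmetical comprehension is equivalent over $\rca_0$ to the statement that $\omega^X$ is well-founded for every well-order~$X$ \cite{girard87,hirst94}. Two minor preliminary facts are needed. First, applying the Girard--Hirst equivalence twice -- or combining it with the fact that $\alpha\mapsto\omega^\alpha$ embeds $\omega^X$ into $\omega^{\omega^X}$ -- shows that arithmetical comprehension is equally equivalent to the statement that $\omega^{\omega^X}$ is well-founded for every well-order~$X$. Secondly, condition~(ii) involves $Y\mapsto 1+X\times Y$ rather than $T^\omega_X(Y)=1+(1+X)\times Y$, so one must bridge the two transformations; the key is that in $\rca_0$ every nonempty well-order~$X$ is isomorphic to $1+X'$ for a suitable well-order~$X'$.

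I would establish this second fact first. Arguing in $\rca_0$: if a nonempty well-order~$X$ had no $<_X$-least element, then primitive recursion would yield a map $g\colon\mathbb N\to X$ with $g(0)$ the $\mathbb N$-least element of~$X$ and $g(k+1)$ the $\mathbb N$-least element of~$X$ that is $<_X g(k)$, so that $k\mapsto g(k)$ would be an infinite $<_X$-descending sequence, against well-foundedness. Hence $X$ has a $<_X$-least element~$0_X$, and the map that sends the unique element of~$1$ to~$0_X$ and is the identity on $X':=\{x\in X\mid x\neq 0_X\}$ is an isomorphism $1+X'\cong X$. For every order~$Y$ this induces an order isomorphism $1+X\times Y\cong T^\omega_{X'}(Y)$ that respects supports, so Bachmann-Howard collapses of the two transformations correspond to one another. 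By Theorem~\ref{thm:char-exp} applied to~$X'$, the order $\omega^{\omega^{X'}}$ is therefore a minimal Bachmann-Howard fixed point of $Y\mapsto 1+X\times Y$ as well. (If $X$ is empty then $1+X\times Y\cong 1$, which is trivially a well-founded Bachmann-Howard fixed point of itself.)

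With this in hand the two implications are short. For (i)$\Rightarrow$(ii), given a well-order~$X$ we may write $X\cong 1+X'$ by the previous paragraph (the empty case being trivial); arithmetical comprehension makes $\omega^{X'}$ and hence $\omega^{\omega^{X'}}$ well-founded, and the latter is a Bachmann-Howard fixed point of $Y\mapsto 1+X\times Y$. For (ii)$\Rightarrow$(i), given an arbitrary well-order~$X'$ we apply~(ii) to the well-order $X:=1+X'$; since $1+X\times Y$ is then literally $T^\omega_{X'}(Y)$, this provides a well-founded Bachmann-Howard fixed point~$Y_0$ of $T^\omega_{X'}$, into which the minimal fixed point $\omega^{\omega^{X'}}$ embeds by Theorem~\ref{thm:char-exp}; hence $\omega^{\omega^{X'}}$ is well-founded, and composing with $\alpha\mapsto\omega^\alpha$ shows $\omega^{X'}$ is well-founded too. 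As $X'$ was arbitrary, arithmetical comprehension follows from the Girard--Hirst equivalence. The step that requires the most care is the normalization $X\cong 1+X'$ for nonempty well-orders, together with the verification that it -- and the resulting identification of the two order transformations and their supports -- is carried out inside $\rca_0$; everything else is a direct combination of Theorem~\ref{thm:char-exp} with the Girard--Hirst equivalence.
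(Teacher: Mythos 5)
Your proof is correct and follows the same overall skeleton as the paper's: both directions reduce to Theorem~\ref{thm:char-exp} combined with the Girard--Hirst equivalence, and your (ii)$\Rightarrow$(i) direction (apply~(ii) to $1+X'$, use minimality to embed $\omega^{\omega^{X'}}$, then compose with $\alpha\mapsto\omega^\alpha$) is essentially identical to the paper's. The only divergence is in (i)$\Rightarrow$(ii), where the two transformations $Y\mapsto 1+X\times Y$ and $T^\omega_X(Y)=1+(1+X)\times Y$ must be reconciled: you normalize $X\cong 1+X'$ by extracting a $<_X$-least element in $\rca_0$ (handling the empty case separately) and then invoke Theorem~\ref{thm:char-exp} for~$X'$, whereas the paper simply restricts the collapse $\vartheta\colon T^\omega_X(\omega_2(X))\to\omega_2(X)$ to the suborder $1+X\times\omega_2(X)$, which remains a Bachmann--Howard collapse because supports and the ordering are inherited. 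Both bridges are valid in $\rca_0$; the paper's restriction trick avoids the least-element lemma and the case split, while your normalization has the small side benefit of exhibiting $\omega^{\omega^{X'}}$ as a \emph{minimal} fixed point of $Y\mapsto 1+X\times Y$ itself, which the restriction argument does not immediately give (and which the corollary does not need).
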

\begin{proof}
 To deduce (ii) from~(i) we consider an arbitrary well-order~$X$. In view of Girard's result, we can invoke~(i) to infer that $\omega^X$ and $\omega_2(X)$ are well-founded. Theorem~\ref{thm:char-exp} yields a Bachmann-Howard collapse
 \begin{equation*}
  \vartheta:1+(1+X)\times\omega_2(X)\rightarrow\omega_2(X).
 \end{equation*}
 The restriction of $\vartheta$ to $1+X\times\omega_2(X)$ witnesses that $\omega_2(X)$ is a Bachmann-Howard fixed point of $Y\mapsto 1+X\times Y$, as one readily verifies. To show that~(ii) implies~(i) we again invoke Girard's result. Hence we must establish that $\omega^X$ is well-founded for an arbitrary well-order~$X$. Since $1+X$ is still well-founded, we can use~(ii) to get a well-founded Bachmann-Howard fixed point $Y$ of the transformation
 \begin{equation*}
  Y\mapsto 1+(1+X)\times Y=T^\omega_X(Y).
 \end{equation*}
 From Theorem~\ref{thm:char-exp} we know that $\omega_2(X)$ can be embedded into~$Y$. Hence $\omega_2(X)$ must be well-founded as well. In view of the embedding
 \begin{equation*}
  \omega^X\ni\alpha\mapsto\omega^\alpha\in\omega_2(X)
 \end{equation*}
 we can infer that $\omega^X$ is well-founded, as required.
\end{proof}

To conclude the first half of the present section we discuss a possible improvement of Theorem~\ref{thm:char-exp}:

\begin{remark}\label{rmk:minimal-initial}
The Bachmann-Howard collapse $\vartheta:T^\omega_X(\omega_2(X))\rightarrow\omega_2(X)$ that we have constructed in the proof of Theorem~\ref{thm:char-exp} does not look quite optimal: For an element $\eta\in\omega_2(X)$ with $\eta<_{\omega_2(X)}\omega_2(x)\cdot\eta$ it might have been more natural to define $\vartheta(\langle 1+x,\eta\rangle)$ as $\omega_2(x)\cdot\eta$ rather than $\omega_2(x)\cdot(\eta+1)$. To make this intuition precise we can observe the following: In the second half of the proof of Theorem~\ref{thm:char-exp} we have constructed an embedding $f:\omega_2(X)\rightarrow Y$ into an arbitrary Bachmann-Howard fixed point $Y$ of the transformation $T^\omega_X$. If we construct this embedding with respect to the given Bachmann-Howard collapse for $Y=\omega_2(X)$, then we get
\begin{equation*}
f(\omega_2(x)\cdot\eta)=\vartheta(\langle 1+x,f(\eta)\rangle)=\omega_2(x)\cdot(f(\eta)+1),
\end{equation*}
which means that $f$ cannot be the identity on~$\omega_2(Y)$. In order to understand this phenomenon in general we recall that the notion of Bachmann-Howard fixed point was defined for dilators, i.\,e.~for particularly uniform endofunctors on the category of linear orders. Functoriality allows us to define the following notion: Given Bachmann-Howard fixed points $X$ and $Y$ with fixed collapsing functions $\vartheta_X:T(X)\rightarrow X$ and $\vartheta_Y:T(Y)\rightarrow Y$, we say that $f:X\rightarrow Y$ is a morphism of Bachmann-Howard fixed points if we have
\begin{equation*}
f\circ\vartheta_X=\vartheta_Y\circ T(f).
\end{equation*}
Following the usual categorical terminology, an initial Bachmann-Howard fixed point consists of an order~$X$ and a Bachmann-Howard collapse $\vartheta:T(X)\rightarrow X$ that admit a unique morphism into any Bachmann-Howard fixed point of the same dilator. The proofs of~\cite[Theorem~3.4]{freund-categorical} and~\cite[Theorem~4.5]{freund-computable} reveal that every dilator has an initial Bachmann-Howard fixed point, which is necessarily unique up to isomorphism. Note that any initial fixed point is minimal in the sense of Definition~\ref{def:collapse}. The notion of initial fixed point is certainly more satisfactory from a theoretical perspective. On the other hand, minimal fixed points are entirely sufficient to deduce Corollary~\ref{cor:arith-comp-collapsing} and similar results. We can also observe that the order type of a minimal fixed point is necessarily unique in the well-founded case. For these reasons we have decided to avoid the additional technicalities that would be necessary to determine initial fixed points, rather than just minimal ones.
\end{remark}

In the second half of this section we are concerned with the orders \mbox{$\varepsilon_X=(\varepsilon_X,<_{\varepsilon_X})$} that have been mentioned in the introduction. In contrast to the case of $\omega^X$, the set $\varepsilon_X$ and the relation $<_{\varepsilon_X}$ have to be defined simultaneously. The underlying set consists of the terms that are generated by the following clauses:
\begin{itemize}
 \item The set $\varepsilon_X$ contains a symbol~$0$, and a symbol $\varepsilon_x$ for each element $x\in X$.
 \item If $\alpha\in\varepsilon_X$ is not of the form $\varepsilon_x$, then we have a term $\omega^\alpha\in\varepsilon_X$.
 \item Given $n>1$ elements $\alpha_n\leq_{\varepsilon_X}\dots\leq_{\varepsilon_X}\alpha_1$ of $\varepsilon_X$, we get $\omega^{\alpha_1}+\dots+\omega^{\alpha_n}\in\varepsilon_X$.
\end{itemize}
The order $<_{\varepsilon_X}$ reflects the intuition that any term of the form $\varepsilon_x$ represents an $\varepsilon$-number, i.\,e.~an ordinal $\alpha$ that satisfies $\omega^\alpha=\alpha$. We refer to \cite[Definition~3.4]{freund-computable} for full details of the somewhat lengthy definition.

On the set $\varepsilon_X$ one can define counterparts of addition, multiplication and exponentiation to the base $\omega$, taking into account that $\varepsilon$-numbers are closed under these operations (cf.~\cite{schuette77}). In particular we have an operation
\begin{equation*}
\varepsilon_X\ni\alpha\mapsto\omega_2(\alpha):=\omega^{\omega^\alpha}\in\varepsilon_X,
\end{equation*}
which plays a similar (though somewhat less important) role as in the analysis of the order $\omega_2(X)$. To define a lenght function $L^\varepsilon_X:\varepsilon_X\rightarrow\mathbb N$ we recursively set
\begin{gather*}
 L^\varepsilon_X(0):=L^\varepsilon_X(\varepsilon_x):=0,\\
 L^\varepsilon_X(\omega^{\alpha_1}+\dots+\omega^{\alpha_n}):=L^\varepsilon_X(\alpha_0)+\dots+L^\varepsilon_X(\alpha_n)+n.
\end{gather*}
We say that an element of $\varepsilon_X$ is decomposable if it is neither equal to $0$ nor of the form $\varepsilon_x$. This terminology is justified in view of the following (cf.~Lemma~\ref{lem:decompose-omega2x}).

\begin{lemma}[$\rca_0$]\label{lem:eps-nf}
Any decomposable element of $\varepsilon_X$ can be uniquely written as $\omega^\alpha+\beta$ with $\alpha,\beta<_{\varepsilon_X}\omega^\alpha+\beta$. Furthermore we have $L^\varepsilon_X(\alpha),L^\varepsilon_X(\beta)<L^\varepsilon_X(\omega^\alpha+\beta)$ for any such decomposition.
\end{lemma}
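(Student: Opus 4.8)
The plan is to follow the pattern of Lemma~\ref{lem:decompose-omega2x}, replacing the unavailable induction along $<_{\varepsilon_X}$ by a direct argument on the normal-form presentation of elements of $\varepsilon_X$. By the generating clauses, a decomposable $\gamma\in\varepsilon_X$ is of the form $\gamma=\omega^{\alpha_1}+\dots+\omega^{\alpha_n}$ with $n\geq 1$ and $\alpha_n\leq_{\varepsilon_X}\dots\leq_{\varepsilon_X}\alpha_1$, where for $n=1$ the exponent $\alpha_1$ is not of the form $\varepsilon_x$. For existence I would take $\alpha:=\alpha_1$ and let $\beta$ be the tail $\omega^{\alpha_2}+\dots+\omega^{\alpha_n}$ of this normal form, with the convention $\beta:=0$ when $n=1$. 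Since the leading exponent of $\beta$ does not exceed $\alpha_1$, the definition of addition on $\varepsilon_X$ yields $\omega^\alpha+\beta=\gamma$. The side condition $\beta<_{\varepsilon_X}\gamma$ holds because $\beta$ is a proper tail of the normal form of $\gamma$. For $\alpha<_{\varepsilon_X}\gamma$ I would use the basic fact that $\delta<_{\varepsilon_X}\omega^\delta$ whenever $\delta$ is not of the form $\varepsilon_x$ (the $\varepsilon$-numbers of $\varepsilon_X$ being exactly the terms $\varepsilon_x$); this is among the arithmetic facts about $\varepsilon_X$ that are available in $\rca_0$. Then for $n=1$ we get $\alpha=\alpha_1<_{\varepsilon_X}\omega^{\alpha_1}=\gamma$, while for $n>1$ we get $\alpha=\alpha_1\leq_{\varepsilon_X}\omega^{\alpha_1}<_{\varepsilon_X}\gamma$, the last inequality because $\gamma$ has further summands beyond $\omega^{\alpha_1}$.

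For uniqueness, suppose $\gamma=\omega^\alpha+\beta=\omega^{\alpha'}+\beta'$ with both decompositions meeting the side conditions. The key point is that $\alpha$ is forced to be the leading exponent $\alpha_1$ of the normal form of $\gamma$: if $\beta\neq 0$ had leading exponent $>_{\varepsilon_X}\alpha$, then $\omega^\alpha+\beta=\beta$, contradicting $\beta<_{\varepsilon_X}\gamma$; hence $\beta=0$ or $\beta$ has leading exponent $\leq_{\varepsilon_X}\alpha$, and in either case $\omega^\alpha+\beta$ has leading exponent $\alpha$. Applying this to both decompositions gives $\alpha=\alpha'=\alpha_1$, whence $\omega^{\alpha_1}+\beta=\omega^{\alpha_1}+\beta'$ and therefore $\beta=\beta'$, either by comparing normal forms directly or by left cancellation of addition in $\varepsilon_X$. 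The length inequalities are then routine: with $\alpha=\alpha_1$ one has $L^\varepsilon_X(\gamma)=L^\varepsilon_X(\alpha_1)+\dots+L^\varepsilon_X(\alpha_n)+n$, so $L^\varepsilon_X(\alpha)=L^\varepsilon_X(\alpha_1)<L^\varepsilon_X(\gamma)$ because $n\geq 1$; and since $\beta$ is built only from $\alpha_2,\dots,\alpha_n$ one similarly obtains $L^\varepsilon_X(\beta)<L^\varepsilon_X(\gamma)$, with the case $\beta=0$ covered by $L^\varepsilon_X(0)=0<L^\varepsilon_X(\gamma)$.

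I do not expect a real obstacle here; the argument is essentially bookkeeping, and in fact shorter than that of Lemma~\ref{lem:decompose-omega2x}, since no left subtraction of exponents is needed. The one point that requires care is matching the abstract manipulations above --- leading term and tail, the absorption rule $\omega^\alpha+\beta=\beta$, the inequality $\delta<_{\varepsilon_X}\omega^\delta$, and the degenerate single-summand case $\gamma=\omega^\alpha$ with $\beta=0$ --- to the precise definition of $\varepsilon_X$ and its normal forms in~\cite{freund-computable}, in particular to the convention governing how the $\varepsilon$-number terms $\varepsilon_x$ may occur inside additive normal forms.
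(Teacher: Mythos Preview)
Your approach is essentially the paper's, and the uniqueness argument and the length bounds go through as you describe. There is, however, one concrete gap in the existence part, and it is precisely the point you flag at the end without resolving.

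By the generating clauses for $\varepsilon_X$, a single term $\omega^\alpha$ is only admitted when $\alpha$ is \emph{not} of the form $\varepsilon_x$; by contrast, a sum $\omega^{\alpha_1}+\dots+\omega^{\alpha_n}$ with $n>1$ allows arbitrary exponents. Hence your definition $\beta:=\omega^{\alpha_2}+\dots+\omega^{\alpha_n}$ is ill-formed in the case $n=2$ with $\alpha_2=\varepsilon_x$: the tail would be the single term $\omega^{\varepsilon_x}$, which does not belong to $\varepsilon_X$. The paper's proof patches exactly this case by setting $\beta:=\varepsilon_x$ (morally $\omega^{\varepsilon_x}=\varepsilon_x$), and then the identity $\omega^\alpha+\beta=\gamma$ and the inequalities $\beta<_{\varepsilon_X}\gamma$, $L^\varepsilon_X(\beta)<L^\varepsilon_X(\gamma)$ still hold. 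A related wrinkle affects your argument for $\alpha<_{\varepsilon_X}\gamma$ when $n>1$: the intermediate step ``$\alpha_1\leq_{\varepsilon_X}\omega^{\alpha_1}$'' is not available as stated when $\alpha_1=\varepsilon_y$, since $\omega^{\alpha_1}$ is then not a term; the paper instead proves $\alpha_1<_{\varepsilon_X}\omega^{\alpha_1}+\dots+\omega^{\alpha_n}$ directly by a short induction on the term $\alpha_1$. Once you add the case distinction for $\beta$ and rephrase this step, your proof coincides with the paper's.
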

\begin{proof}
 Let us first establish existence: Given a decomposable $\omega^{\alpha_1}+\dots+\omega^{\alpha_n}\in\varepsilon_X$, we set $\alpha:=\alpha_1$ and
 \begin{equation*}
  \beta:=\begin{cases}
         0 & \text{if $n=1$},\\
         \varepsilon_x & \text{if $n=2$ and $\alpha_2=\varepsilon_x$},\\
         \omega^{\alpha_2}+\dots+\omega^{\alpha_n} & \text{otherwise}.
        \end{cases}
 \end{equation*}
 By construction (and by the definition of addition and exponentiation on $\varepsilon_X$) we have $\omega^\alpha+\beta=\omega^{\alpha_1}+\dots+\omega^{\alpha_n}$. A straightforward induction on the term $\alpha_1$ yields
 \begin{equation*}
  \alpha_1<_{\varepsilon_X}\omega^{\alpha_1}+\dots+\omega^{\alpha_n},
 \end{equation*}
 which amounts to $\alpha<_{\varepsilon_X}\omega^\alpha+\beta$. In all cases it is straightforward to verify that we have $\beta<_{\varepsilon_X}\omega^\alpha+\beta$ as well as $L^\varepsilon_X(\alpha),L^\varepsilon_X(\beta)<L^\varepsilon_X(\omega^\alpha+\beta)$. Due to the monotonicity of addition, uniqueness reduces to the claim that
 \begin{equation*}
  \beta<_{\varepsilon_X}\omega^\alpha+\beta=\omega^\gamma+\delta>_{\varepsilon_X}\delta
 \end{equation*}
 implies $\alpha=\gamma$. Aiming at a contradiction, we assume $\alpha<_{\varepsilon_X}\gamma$. The latter yields
 \begin{equation*}
  \omega^\alpha+\beta=\omega^\gamma+\delta=\omega^\alpha+\omega^\gamma+\delta=\omega^\alpha+\omega^\alpha+\beta,
 \end{equation*}
 which is incompatible with $\beta<_{\varepsilon_X}\omega^\alpha+\beta$.
\end{proof}

We now want to characterize $\varepsilon_X$ as a minimal Bachmann-Howard fixed point of the order transformation
\begin{equation*}
 Y\mapsto T^\varepsilon_X(Y)=1+Y^2+X.
\end{equation*}
Elements of the summands $1$, $Y^2$ and $X$ will be written as $\bot$, $\langle y_0,y_1\rangle$ and $x$, respectively. The supports from Definition~\ref{def:supports} amount to
\begin{equation*}
 \supp^\varepsilon_Y(\bot)=\supp^\varepsilon_Y(x)=\emptyset\qquad\text{and}\qquad\supp^\varepsilon_Y(\langle y_0,y_1\rangle)=\{y_0,y_1\}.
\end{equation*}
Together with Definition~\ref{def:collapse}, this means that a function $\vartheta:T^\varepsilon_X(Y)\rightarrow Y$ is a Bachmann-Howard collapse if, and only if, the following conditions are satisfied:
\begin{enumerate}
 \item[(i)] we have $\vartheta(\bot)<_Y\vartheta(\langle y_0,y_1\rangle)$ for all $y_0,y_1\in Y$, and $\vartheta(\bot)<_Y\vartheta(x)<_Y\vartheta(x')$ for all $x,x'\in X$ with $x<_X x'$,
 \item[(i$'$)] $\langle y_0,y_1\rangle<_{Y^2}\langle y'_0,y'_1\rangle$ implies $\vartheta(\langle y_0,y_1\rangle)<_Y \vartheta(\langle y'_0,y'_1\rangle)$, under the side condition that we have $y_0,y_1<_Y\vartheta(\langle y'_0,y'_1\rangle)$,
 \item[(i$''$)] $\vartheta(\langle y_0,y_1\rangle)<_Y\vartheta(x)$ holds for any $y_0,y_1\in Y$ and $x\in X$ with $y_0,y_1<_Y\vartheta(x)$,
 \item[(ii)] we have $y_0,y_1<_Y\vartheta(\langle y_0,y_1\rangle)$ for all $y_0,y_1\in Y$.
\end{enumerate}
We can now establish the desired characterization:

\begin{theorem}[$\rca_0$]\label{thm:eps-fp}
The order $\varepsilon_X$ is a minimal Bachmann-Howard fixed point of the transformation~$T^\varepsilon_X$, for any order~$X$.
\end{theorem}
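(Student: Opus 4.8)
The plan is to follow the two-part template of the proof of Theorem~\ref{thm:char-exp}, with Lemma~\ref{lem:eps-nf} playing the role of Lemma~\ref{lem:decompose-omega2x}.

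For the first part (that $\varepsilon_X$ is a Bachmann-Howard fixed point of $T^\varepsilon_X$) I would define $\vartheta:1+\varepsilon_X^2+X\to\varepsilon_X$ by $\vartheta(\bot):=0$, by $\vartheta(x):=\varepsilon_x$ for $x\in X$, and by $\vartheta(\langle\alpha,\beta\rangle):=\omega_2(\alpha+1)\cdot(\beta+1)$, where $\omega_2(\gamma)=\omega^{\omega^\gamma}$ and successor, product and exponential are the operations on $\varepsilon_X$. The naive guess $\vartheta(\langle\alpha,\beta\rangle)=\omega^\alpha+\beta$ violates clause~(ii), since $\beta$ may absorb $\omega^\alpha$ (e.g.\ $\omega^0+\omega^\omega=\omega^\omega$); the refinements $\omega^{\alpha+1}+\beta+1$ or $\omega^{\alpha+1}\cdot(\beta+1)$ restore~(ii) but break clause~(i$'$) for $\alpha<_{\varepsilon_X}\alpha'$, because $\omega^{\alpha+1}$ is not absorbed under multiplication by $\omega^{\alpha'+1}$. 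Passing to $\omega_2(\alpha+1)$ fixes this, since $\omega_2(\alpha+1)\cdot\omega_2(\alpha'+1)=\omega_2(\alpha'+1)$ whenever $\alpha<_{\varepsilon_X}\alpha'$, exactly by the computation in the proof of Lemma~\ref{lem:decompose-omega2x}, while the extra ``$+1$'' in the exponent handles the case that $\alpha$ is of the form $\varepsilon_x$ (so $\omega^\alpha=\alpha$). Granting this, conditions~(i) and~(ii) are immediate, as $\vartheta(\langle\alpha,\beta\rangle)$ is a limit with $\vartheta(\langle\alpha,\beta\rangle)\geq_{\varepsilon_X}\omega_2(\alpha+1)>_{\varepsilon_X}\alpha$ and $\vartheta(\langle\alpha,\beta\rangle)\geq_{\varepsilon_X}\beta+1>_{\varepsilon_X}\beta$; condition~(i$''$) follows since $\varepsilon_x$ is closed under successor, exponential and product; and for~(i$'$) in the main case $\alpha<_{\varepsilon_X}\alpha'$ one derives $\beta+1\leq_{\varepsilon_X}\vartheta(\langle\alpha',\beta'\rangle)$ from the side condition, hence $\vartheta(\langle\alpha,\beta\rangle)=\omega_2(\alpha+1)\cdot(\beta+1)\leq_{\varepsilon_X}\omega_2(\alpha+1)\cdot\vartheta(\langle\alpha',\beta'\rangle)=\omega_2(\alpha'+1)\cdot(\beta'+1)=\vartheta(\langle\alpha',\beta'\rangle)$, and this is strict because the last displayed inequality $\beta+1\leq_{\varepsilon_X}\vartheta(\langle\alpha',\beta'\rangle)$ is in fact strict, the right side being a limit and the left a successor. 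The remaining subcase $\alpha=_{\varepsilon_X}\alpha'$, $\beta<_{\varepsilon_X}\beta'$ of~(i$'$) is just monotonicity of multiplication.

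For the second part (minimality) I would take any Bachmann-Howard collapse $\vartheta:1+Y^2+X\to Y$ and define $f:\varepsilon_X\to Y$ by recursion on the term length $L^\varepsilon_X$, using the unique decompositions from Lemma~\ref{lem:eps-nf}: $f(0):=\vartheta(\bot)$, $f(\varepsilon_x):=\vartheta(x)$, and $f(\omega^\alpha+\beta):=\vartheta(\langle f(\alpha),f(\beta)\rangle)$. To see that $\xi<_{\varepsilon_X}\xi'$ implies $f(\xi)<_Y f(\xi')$ I would induct on $L^\varepsilon_X(\xi)+L^\varepsilon_X(\xi')$ (a $\Pi^0_1$-induction, available in $\rca_0$ as in Theorem~\ref{thm:char-exp}) and split on the shapes of $\xi$ and $\xi'$. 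The case $\xi=0$ uses clause~(i). When exactly one of $\xi,\xi'$ has the form $\varepsilon_x$, one uses that $\omega^\alpha+\beta<_{\varepsilon_X}\varepsilon_x$ is equivalent to $\alpha,\beta<_{\varepsilon_X}\varepsilon_x$ (and $\varepsilon_x<_{\varepsilon_X}\omega^\alpha+\beta$ to $\varepsilon_x\leq_{\varepsilon_X}\alpha$ or $\varepsilon_x\leq_{\varepsilon_X}\beta$), together with clauses~(i$''$) and~(ii) and the induction hypothesis. When both are decomposable, say $\xi=\omega^\alpha+\beta$ and $\xi'=\omega^{\alpha'}+\beta'$: the subcase $\alpha=_{\varepsilon_X}\alpha'$ reduces to $\beta<_{\varepsilon_X}\beta'$; the subcase $\alpha<_{\varepsilon_X}\alpha'$ is handled by clause~(i$'$), whose side condition is met because $f(\alpha)<_Y f(\alpha')<_Y f(\xi')$ by clause~(ii) and the induction hypothesis, and $f(\beta)<_Y f(\xi')$ since $\beta<_{\varepsilon_X}\xi'$ by Lemma~\ref{lem:eps-nf} and again the induction hypothesis; and the subcase $\alpha>_{\varepsilon_X}\alpha'$ is vacuous, since Lemma~\ref{lem:eps-nf} forces the leading exponent of a decomposable $\omega^\gamma+\delta$ to equal $\gamma$, whence $\alpha>_{\varepsilon_X}\alpha'$ would give $\xi>_{\varepsilon_X}\xi'$. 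The length inequalities of Lemma~\ref{lem:eps-nf} ensure throughout that the induction hypothesis applies.

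The main obstacle I expect is in the first part: choosing a collapse whose values are small enough to keep clause~(i$'$) --- which forces the $\alpha$-component into a multiplicatively absorbing position, hence the double exponential $\omega_2(\alpha+1)$ --- yet large enough for clause~(ii), which forces the ``$+1$''s, and then checking the ordinal-arithmetic identities in $\rca_0$. In the second part the only care needed is the bookkeeping with $L^\varepsilon_X$ and the verification of the side condition in clause~(i$'$).
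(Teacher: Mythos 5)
Your proposal is correct and follows essentially the same route as the paper's proof: the same collapse $\vartheta(\langle\alpha,\beta\rangle)=\omega_2(\alpha+1)\cdot(\beta+1)$, $\vartheta(x)=\varepsilon_x$, and the same embedding $f$ defined via the decomposition of Lemma~\ref{lem:eps-nf}, with the order-preservation shown by induction on $L^\varepsilon_X(\eta)+L^\varepsilon_X(\xi)$. Your extra discussion of why the naive choices $\omega^\alpha+\beta$ and $\omega^{\alpha+1}\cdot(\beta+1)$ fail is a sound elaboration of the parenthetical remark in the paper.
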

\begin{proof}
To witness that $\varepsilon_X$ is a Bachmann-Howard fixed point of $T^\varepsilon_X$ we need a collapsing function
\begin{equation*}
\vartheta:1+\varepsilon_X\times\varepsilon_X+X\rightarrow\varepsilon_X.
\end{equation*}
Relying on the ordinal arithmetic that is available in $\varepsilon_X$, we set
\begin{align*}
\vartheta(\bot)&:=0,\\
\vartheta(\langle\alpha,\beta\rangle)&:=\omega_2(\alpha+1)\cdot(\beta+1),\\
\vartheta(x)&:=\varepsilon_x.
\end{align*}
It is straightforward to see that the above conditions~(i) and~(ii) are satisfied (note that condition~(ii) could fail if we were to replace $\omega_2(\alpha+1)$ by $\omega_2(\alpha)$, as the proof of Theorem~\ref{thm:char-exp} might suggest). Condition~(i$'$) is verified as in the proof of~Theorem~\ref{thm:char-exp}. To establish condition~(i$''$) we consider arbitrary $\alpha,\beta\in\varepsilon_X$ and $x\in X$ with
\begin{equation*}
\alpha,\beta<_{\varepsilon_X}\vartheta(x)=\varepsilon_x.
\end{equation*}
Considering the order on $\varepsilon_X$ (cf.~\cite[Definition~3.4]{freund-computable}), it is straightforward to see that the element $\varepsilon_x$ behaves like an $\varepsilon$-number. Hence we obtain
\begin{equation*}
\vartheta(\langle\alpha,\beta\rangle)=\omega_2(\alpha+1)\cdot(\beta+1)<_{\varepsilon_X}\varepsilon_x=\vartheta(x).
\end{equation*}
This completes the proof that $\varepsilon_X$ is a Bachmann-Howard fixed point of~$T^\varepsilon_X$. Let us now consider an arbitrary Bachmann-Howard collapse
\begin{equation*}
\vartheta:1+Y^2+X\rightarrow Y.
\end{equation*}
We need to construct an embedding $f:\varepsilon_X\rightarrow Y$. In view of Lemma~\ref{lem:eps-nf} we can recursively define
\begin{align*}
f(0)&:=\vartheta(\bot),\\
f(\omega^\alpha+\beta)&:=\vartheta(\langle f(\alpha),f(\beta)\rangle),\quad\text{where $\alpha,\beta<\omega^\alpha+\beta$},\\
f(\varepsilon_x)&:=\vartheta(x).
\end{align*}
By induction on $L^\varepsilon_X(\eta)+L^\varepsilon_X(\xi)$ we can show that $\eta<_{\varepsilon_X}\xi$ implies $f(\eta)<_Y f(\xi)$. The first interesting case concerns an inequality
\begin{equation*}
\eta=\varepsilon_x<_{\varepsilon_X}\omega^\alpha+\beta=\xi.
\end{equation*}
Since $\varepsilon_x$ behaves like an $\varepsilon$-number we must have $\varepsilon_x\leq_{\varepsilon_X}\alpha$. Using the induction hypothesis and clause~(ii) above we get
\begin{equation*}
f(\eta)\leq_Y f(\alpha)<_Y \vartheta(\langle f(\alpha),f(\beta)\rangle)=f(\xi).
\end{equation*}
Let us now consider an inequality
\begin{equation*}
\eta=\omega^\alpha+\beta<_{\varepsilon_X}\varepsilon_x=\xi.
\end{equation*}
By Lemma~\ref{lem:eps-nf} we get $\alpha,\beta<_{\varepsilon_X}\xi$, so that the induction hypothesis yields
\begin{equation*}
f(\alpha),f(\beta)<_Y f(\xi)=\vartheta(x).
\end{equation*}
Invoking clause~(i$''$) we can infer
\begin{equation*}
f(\eta)=\vartheta(\langle f(\alpha),f(\beta)\rangle)<_{\varepsilon_X}\vartheta(x)=f(\xi).
\end{equation*}
Finally, we consider an inequality
\begin{equation*}
\eta=\omega^\alpha+\beta<_{\varepsilon_X}\omega^\gamma+\delta=\xi.
\end{equation*}
Considering the proof of Lemma~\ref{lem:eps-nf}, it is straightforward to see that we must have~$\alpha\leq_{\varepsilon_X}\gamma$. If we have $\alpha=\gamma$, then we get $\beta<_{\varepsilon_X}\delta$. In any case we can use the induction hypothesis to infer
\begin{equation*}
\langle f(\alpha),f(\beta)\rangle<_{Y^2}\langle f(\gamma),f(\delta)\rangle.
\end{equation*}
In view of $\alpha,\beta<_{\varepsilon_X}\eta<_{\varepsilon_X}\xi$ the induction hypothesis also yields
\begin{equation*}
f(\alpha),f(\beta)<_Y f(\xi)=\vartheta(\langle f(\gamma),f(\delta)\rangle).
\end{equation*}
By condition~(i$'$) we now obtain
\begin{equation*}
f(\eta)=\vartheta(\langle f(\alpha),f(\beta)\rangle)<_Y\vartheta(\langle f(\gamma),f(\delta)\rangle)=f(\xi),
\end{equation*}
as required.
\end{proof}

The statement that $\varepsilon_X$ is well-founded for any well-order~$X$ is equivalent to the assertion that the $\omega$-jump of any set exists, as shown by A.~Marcone and \mbox{A.~Montalb\'an}~\cite{marcone-montalban} (see also the proof-theoretic argument due to B.~Afshari and M.~Rathen~\cite{rathjen-afshari}). Together with Theorem~\ref{thm:eps-fp} we obtain the following:

\begin{corollary}\label{cor:omega-jumps}
The following are equivalent over $\rca_0$:
\begin{enumerate}[label=(\roman*)]
\item the $\omega$-jump of every set exists (which is the principal axiom of $\aca_0^+$),
\item for every well-order~$X$ the transformation $Y\mapsto 1+Y^2+X$ has a well-founded Bachmann-Howard fixed point.
\end{enumerate}
\end{corollary}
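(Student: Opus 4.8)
The plan is to mimic the proof of Corollary~\ref{cor:arith-comp-collapsing}, using the theorem of Marcone and Montalb\'an (cf.~also Afshari and Rathjen) just recalled — that ``$\varepsilon_X$ is well-founded for every well-order~$X$'' is equivalent over $\rca_0$ to the statement that the $\omega$-jump of every set exists — in place of Girard's theorem on $X\mapsto\omega^X$. A pleasant feature here is that the transformation occurring in clause~(ii) is literally $T^\varepsilon_X$, so, in contrast to Corollary~\ref{cor:arith-comp-collapsing}, no passage between $X$ and $1+X$ is required; one can apply Theorem~\ref{thm:eps-fp} directly.

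For the implication from~(i) to~(ii) I would fix an arbitrary well-order~$X$. By the cited equivalence, (i) entails that $\varepsilon_X$ is well-founded. Theorem~\ref{thm:eps-fp} provides a Bachmann-Howard collapse $\vartheta:1+\varepsilon_X^2+X\rightarrow\varepsilon_X$, witnessing that $\varepsilon_X$ is a Bachmann-Howard fixed point of $Y\mapsto 1+Y^2+X$. Since $\varepsilon_X$ is well-founded, this is a well-founded Bachmann-Howard fixed point, as required.

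For the converse I would again invoke the Marcone--Montalb\'an equivalence, reducing~(i) to the claim that $\varepsilon_X$ is well-founded for an arbitrary well-order~$X$. Applying~(ii) to this~$X$ yields a well-founded Bachmann-Howard fixed point~$Y$ of $T^\varepsilon_X$. By the minimality part of Theorem~\ref{thm:eps-fp} there is an embedding $\varepsilon_X\rightarrow Y$; since $\rca_0$ proves that a linear order which embeds into a well-order is itself well-founded (a descending sequence would transport along the embedding), we conclude that $\varepsilon_X$ is well-founded, as needed.

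I do not expect a genuine obstacle: the whole argument is a routine assembly of Theorem~\ref{thm:eps-fp} with an external characterization, exactly parallel to Corollary~\ref{cor:arith-comp-collapsing}. The only points meriting care are that the cited equivalence is indeed available over the base theory $\rca_0$ (it is), and that both halves of Theorem~\ref{thm:eps-fp} — existence \emph{and} minimality of the fixed point — are used only in their $\rca_0$-provable form, which they are. All the real mathematical content sits in Theorem~\ref{thm:eps-fp} and in the proof-theoretic/computability-theoretic analysis of the $\omega$-jump principle.
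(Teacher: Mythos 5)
Your proposal is correct and matches the paper's (implicit) argument exactly: the paper derives Corollary~\ref{cor:omega-jumps} by combining Theorem~\ref{thm:eps-fp} with the Marcone--Montalb\'an/Afshari--Rathjen equivalence, precisely as you do, and your observation that the transformation in~(ii) is literally $T^\varepsilon_X$ (so no $X$ versus $1+X$ adjustment is needed, unlike in Corollary~\ref{cor:arith-comp-collapsing}) is accurate.
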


\section{Collapsing and the Veblen hierarchy}\label{sect:veblen}

In this section we show how the orders $\varphi(1+X)0$ and $\Gamma_X$ can be constructed as Bachmann-Howard fixed points. This will yield characterizations of arithmetical transfinite recursion and of the principle that every set lies in an $\omega$-model of $\atr$.

Let us begin by recalling the Veblen hierarchy: A function $f$ from ordinals to ordinals is called a normal function if it is strictly increasing and continuous at limit stages. Equivalently, $f$ is the unique increasing enumeration of a closed and unbounded (club) class of ordinals. If $f$ is a normal function, then the class
\begin{equation*}
\{\alpha\,|\,f(\alpha)=\alpha\}
\end{equation*}
of its fixed points is itself closed and unbounded. The normal function that enumerates these fixed points is called the derivative of $f$ and is denoted by $f'$. The Veblen hierarchy is a family of normal functions~$\varphi_\alpha$, indexed by the ordinals. The first function in this hierarchy is usually given as
\begin{equation*}
\varphi_0(\beta)=\omega^\beta.
\end{equation*}
Since the intersection of set-many clubs is itself a club, the function at stage $\alpha>0$ can be recursively defined by
\begin{equation*}
\varphi_\alpha:=\text{``the increasing enumeration of $\textstyle\bigcap_{\gamma<\alpha}\{\beta\,|\,\varphi_\gamma(\beta)=\beta\}$"}.
\end{equation*}
In particular we have $\varphi_{\alpha+1}={\varphi_\alpha}'$ at successor stages. Since the values of $\varphi_\alpha$ are fixed points of all previous functions in the hierarchy, we obtain
\begin{equation*}
\varphi_\gamma\circ\varphi_\alpha=\varphi_\alpha\quad\text{whenever $\gamma<\alpha$}.
\end{equation*}
It is straightforward to deduce that we have
\begin{equation}\tag{$\star$}\label{eq:veblen}
\varphi_\alpha(\beta)<\varphi_\gamma(\delta)\quad\Leftrightarrow\quad\begin{cases}
\text{either $\alpha<\beta$ and $\beta<\varphi_\gamma(\delta)$},\\
\text{or $\alpha=\gamma$ and $\beta<\delta$},\\
\text{or $\alpha>\gamma$ and $\varphi_\alpha(\beta)<\delta$}.
\end{cases}
\end{equation}
Also note that the values of $\varphi_\alpha$ are additively closed; for $\alpha>0$ they are $\varepsilon$-numbers.

Relativized notation systems $\varphi (1+X)0$ for values of the Veblen function have been described in~\cite[Definition~2.2]{rathjen-weiermann-atr} (note that our summand $1$ corresponds to the minimal element $0_Q$ that was required in the cited reference). As in the case of $\varepsilon_X$, the underlying set of $\varphi(1+X)0$ needs to be defined simultaneously with the order relation. The set $\varphi (1+X)0$ and the auxiliary function $h:\varphi (1+X)0\rightarrow 1+X$ are recursively defined by the following clauses (recall that $\bot$ denotes the unique element of $1$, which coincides with the minimal element of the order $1+X$):
\begin{itemize}
\item We have an element $0\in\varphi (1+X)0$ with $h(0)=\bot$.
\item Given elements $x\in 1+X$ and $\alpha\in\varphi(1+X)0$ with $h(\alpha)\leq_{1+X}x$, we get a term $\varphi_x\alpha\in\varphi(1+X)0$ with $h(\varphi_x\alpha)=x$.
\item Given $n>1$ elements $\varphi_{x_n}\alpha_n\leq_{\varphi(1+X)0}\dots\leq_{\varphi(1+X)0}\varphi_{x_1}\alpha_1$ of the indicated form, we get $\alpha:=\varphi_{x_1}\alpha_1+\dots+\varphi_{x_n}\alpha_n\in\varphi(1+X)0$ with $h(\alpha)=\bot$.
\end{itemize}
The order on $\varphi(1+X)0$ (which we will usually denote by $<$ rather than $<_{\varphi(1+X)0}$) reflects equivalence~(\ref{eq:veblen}), as well as the intuition that elements of the form $\varphi_x\alpha$ are additively closed. Full details can be found in~\cite[Section~2]{rathjen-weiermann-atr}. Note that we write $\varphi_x\alpha$ (without parentheses) for terms in $\varphi(1+X)0$ but $\varphi_\gamma(\alpha)$ (with parentheses) for values of the Veblen function on actual ordinals (an exception is made when parentheses in a term are needed to avoid ambiguity). In the sequel, we also write~(\ref{eq:veblen}) for the ``term version" of this equivalence in~$\varphi(1+X)0$.

Similarly to the previous section, we define a length function $L^\varphi_X:\varphi(1+X)0\rightarrow\mathbb N$ by the recursive clauses
\begin{align*}
  L^\varphi_X(0)&:=0,\\
  L^\varphi_X(\varphi_{x_1}\alpha_1+\dots+\varphi_{x_n}\alpha_n)&:=L^\varphi_X(\alpha_1)+\dots+L^\varphi_X(\alpha_n)+n,
\end{align*}
where the second clause includes the case $n=1$. We will need the following fact:

\begin{lemma}[$\rca_0$]\label{lem:subterms-ineq}
We have $\alpha<\varphi_x\alpha$  for any element $\varphi_x\alpha\in\varphi(1+X)0$.
\end{lemma}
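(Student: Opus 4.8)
The plan is to argue by induction on the length of the term $\varphi_x\alpha$, using the simultaneous recursive definition of the set $\varphi(1+X)0$ together with its order relation. Concretely, I would use induction on $L^\varphi_X(\varphi_x\alpha)$ (equivalently, on the build-up of the term), since the claim $\alpha < \varphi_x\alpha$ is $\Pi^0_1$ once we fix the term, so this induction is available in $\rca_0$. The base and inductive cases are driven entirely by the definition of the order from \cite[Section~2]{rathjen-weiermann-atr}, which reflects equivalence~(\ref{eq:veblen}).

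The main case analysis is on the shape of $\alpha$. First, if $\alpha = 0$, then $0 < \varphi_x 0$ holds because $\varphi_x 0$ is a nonzero term and $0$ is the minimal element of $\varphi(1+X)0$. Second, if $\alpha$ is a ``principal'' term of the form $\varphi_y\beta$ (so $h(\alpha)\leq_{1+X} x$ is forced by the formation rule, i.e.\ $y \leq_{1+X} x$), then I would compare $\varphi_y\beta$ with $\varphi_x\beta'$ where $\beta' = \alpha$; by~(\ref{eq:veblen}) applied at the level of the notation system, $\varphi_y\beta < \varphi_x\alpha$ reduces to a subcase analysis: if $y <_{1+X} x$ it suffices that $\beta < \varphi_x\alpha$, which follows from the induction hypothesis $\beta < \varphi_y\beta = \alpha < \varphi_x\alpha$ (the last inequality being what we are trying to establish in an outer layer — so care is needed here; see below); if $y = x$ it suffices that $\beta < \alpha$, which is immediate since $\beta < \varphi_y\beta = \alpha$ by the induction hypothesis applied to the strictly shorter term $\varphi_y\beta$; if $y >_{1+X} x$ then the formation condition $h(\alpha) = y \leq_{1+X} x$ is violated, so this subcase does not occur. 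Third, if $\alpha = \varphi_{x_1}\alpha_1 + \dots + \varphi_{x_n}\alpha_n$ with $n > 1$, then since $\varphi_x\alpha$ is additively closed and dominates each of its ``components'', it suffices to show each summand $\varphi_{x_i}\alpha_i < \varphi_x\alpha$; for the leading summand this is the principal-term case just handled (with $y = x_1$, $\beta = \alpha_1$, noting $x_1 \leq_{1+X} x$ is needed and holds because $h(\varphi_{x_1}\alpha_1) = x_1$ and the term $\varphi_x\alpha$ can only be formed when $h(\alpha) = \bot \leq_{1+X} x$, while $\alpha \geq \varphi_{x_1}\alpha_1$ forces the comparison to go through), and the remaining summands are dominated by the leading one.

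I expect the genuine obstacle to be setting up the induction so that the circular-looking step in the second case is legitimate: when reducing $\varphi_y\beta < \varphi_x\alpha$ (with $y <_{1+X} x$) to $\beta < \varphi_x\alpha$, one cannot simply invoke the statement being proved. The clean fix is to prove the slightly more general statement simultaneously by induction on $L^\varphi_X$: namely, that for every term of the form $\varphi_x\alpha$ and every ``subterm'' $\beta$ occurring in the construction of $\alpha$ (in particular $\alpha$ itself, and all $\alpha_i$ and the $\varphi_{x_i}\alpha_i$), one has $\beta < \varphi_x\alpha$. Ordering these by $L^\varphi_X(\beta)$, the recursive clauses of the order relation let each comparison be reduced to comparisons between strictly shorter terms, so the induction closes. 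Once this generalized statement is available, the lemma is the instance $\beta = \alpha$. I would present this as: first state the generalized claim, then verify it by following the three cases above, reading off each reduction directly from the definition of $<$ on $\varphi(1+X)0$ and citing \cite[Section~2]{rathjen-weiermann-atr} for the precise clauses; the arithmetic is routine and I would not spell out every subcase.
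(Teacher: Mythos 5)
Your overall strategy --- strengthen the lemma to a statement about subterms and argue by induction on term length --- is the same as the paper's, and you correctly identify the circularity that forces the generalization. The gap lies in your case analysis: you assert that the subcase $y>_{1+X}x$ ``does not occur'' because the formation rule forces $h(\alpha)\leq_{1+X}x$. That is true only for the \emph{immediate} subterm $\alpha$ of $\varphi_x\alpha$; it fails for deeper subterms, which your generalized claim must also cover. For instance, $\varphi_\bot(\varphi_{1+x}0+\varphi_{1+x}0)$ is a well-formed term (the $h$-value of a sum is $\bot$, so the formation condition for the outer $\varphi_\bot$ is satisfied), yet its summand $\varphi_{1+x}0$ carries the index $1+x>_{1+X}\bot$. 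For the same reason your assertion in the sum case that $x_1\leq_{1+X}x$ is false in general. In this situation equivalence~(\ref{eq:veblen}) reduces $\varphi_{x_1}\alpha_1<\varphi_x\alpha$ to $\varphi_{x_1}\alpha_1<\alpha$, a comparison between a proper subterm of $\alpha$ and $\alpha$ itself --- which is \emph{not} an instance of your generalized statement (that only bounds subterms of $\alpha$ by $\varphi_x\alpha$). So your induction does not close.

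The fix is to generalize further, as the paper does: prove ``if $\eta$ is a proper subterm of $\gamma\in\varphi(1+X)0$ then $\eta<\gamma$'' for \emph{arbitrary} $\gamma$, not only for $\gamma$ of the form $\varphi_x\alpha$, by induction on $L^\varphi_X(\eta)+L^\varphi_X(\gamma)$. Under that induction hypothesis the troublesome case is handled as follows: suppose $\varphi_y\beta$ occurs as a proper subterm of $\varphi_z\delta$ with $z<_{1+X}y$. Then $h(\delta)\leq_{1+X}z<_{1+X}y=h(\varphi_y\beta)$ shows $\varphi_y\beta\neq\delta$, so $\varphi_y\beta$ is a \emph{proper} subterm of $\delta$; the induction hypothesis (applied to a pair of smaller combined length) gives $\varphi_y\beta<\delta$, which is exactly what~(\ref{eq:veblen}) requires to conclude $\varphi_y\beta<\varphi_z\delta$. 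The remaining cases of your analysis, including the reduction of $\beta<\varphi_x\alpha$ in the subcase $y<_{1+X}x$ that you rightly flagged as delicate, go through unchanged once the stronger statement is the one being proved.
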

Before we prove the lemma, let us explain how it can be reconciled with the intuition that we should have $\varphi_y\beta=\varphi_x(\varphi_y\beta)$ in case $x<_{1+X} y$. The point is that $\varphi(1+X)0$ does not even allow to form the ``superfluous" term $\varphi_x(\varphi_y\beta)$, which violates the condition $h(\varphi_y\beta)\leq_{1+X}x$.
\begin{proof}
The following stronger claim can be shown by induction on $L^\varphi_X(\alpha)+L^\varphi_X(\gamma)$:
\begin{equation*}
\text{``if $\alpha$ is a proper subterm of $\gamma\in\varphi(1+X)0$, then we have $\alpha<\gamma$."}
\end{equation*}
Let us consider the most interesting case, in which we have $\alpha=\varphi_x\beta$ and $\gamma=\varphi_y\delta$. In view of equivalence~(\ref{eq:veblen}) we need to distinguish three cases: First assume~$x<_X y$. By induction hypothesis we get $\beta<\varphi_y\delta$, which does indeed imply~$\varphi_x\beta<\varphi_y\delta$. Now assume that we have $x=y$. Since $\beta$ is a proper subterm of $\delta$, the induction hypothesis yields $\beta<\delta$. Once again~(\ref{eq:veblen}) yields the claim. Finally, assume that we have $y<_X x$. In view of
\begin{equation*}
h(\delta)\leq_Xy<_X x=h(\varphi_x\beta)
\end{equation*}
we see that $\varphi_x\beta$ cannot be equal to $\delta$; hence it must be a proper subterm. Then the induction hypothesis yields $\varphi_x\beta<\delta$, as needed to conclude by~(\ref{eq:veblen}).
\end{proof}

Above we have used $x$ to denote an arbitrary element of $1+X$. If we want to distinguish the elements of the two summands, then we write them as $\bot$ and $1+x$. On $\varphi(1+X)0$ one readily defines an operation of addition with the usual properties. Exponentiation to the base $\omega$ can be given by
\begin{equation*}
\omega^\alpha=\begin{cases}
\alpha & \text{if $\alpha=\varphi_x\beta$ with $\bot<_{1+X}x$},\\
\varphi_{\bot}\alpha & \text{otherwise}.
\end{cases}
\end{equation*}
This allows to develop a notion of Cantor normal form, which supports the usual definition of multiplication. Let us observe that values of the form $\varphi_{1+x}(\beta)$ do indeed behave like $\varepsilon$-numbers: In view of $\bot<_{1+X}1+x$ equivalence~(\ref{eq:veblen}) reveals that $\alpha<\varphi_{1+x}\beta$ implies $\omega^\alpha<\varphi_{1+x}\beta$. An element of $\varphi(1+X)0$ will be called decomposable if it is not equal to $0$ and not of the form $\varphi_{1+x}\alpha$ (hence $\varphi_\bot\alpha$ is considered as decomposable). Let us state an appropriate version of Lemma~\ref{lem:eps-nf}:

\begin{lemma}[$\rca_0$]\label{lem:phi-nf}
Any decomposable element of $\varphi(1+X)0$ can be uniquely written as $\omega^\alpha+\beta$ with $\alpha,\beta<\omega^\alpha+\beta$. Furthermore $L^\varphi_X(\alpha),L^\varphi_X(\beta)<L^\varphi_X(\omega^\alpha+\beta)$ holds for any such decomposition.
\end{lemma}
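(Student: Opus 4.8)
This is the $\varphi(1+X)0$-analogue of Lemma~\ref{lem:eps-nf}, and the plan is to adapt that proof; the only genuine novelty is the two-case definition of $\omega^{(\cdot)}$ on $\varphi(1+X)0$, so some care is needed about which leading term plays the role of ``$\omega^\alpha$''. For \emph{existence}, I would write a given decomposable element in the form $\varphi_{x_1}\alpha_1+\dots+\varphi_{x_n}\alpha_n$ with $n\geq 1$ (so the terms are $\leq$-decreasing) and set
\begin{equation*}
\alpha:=\begin{cases}\alpha_1&\text{if }x_1=\bot,\\\varphi_{1+x}\alpha_1&\text{if }x_1=1+x,\end{cases}\qquad\beta:=\begin{cases}0&\text{if }n=1,\\\varphi_{x_2}\alpha_2+\dots+\varphi_{x_n}\alpha_n&\text{if }n\geq 2.\end{cases}
\end{equation*}
The point to verify is that $\omega^\alpha=\varphi_{x_1}\alpha_1$ in both cases: if $x_1=\bot$, then admissibility of $\varphi_\bot\alpha_1$ forces $h(\alpha_1)=\bot$, so $\alpha_1$ is not of the form $\varphi_y\gamma$ with $\bot<_{1+X}y$, whence $\omega^{\alpha_1}=\varphi_\bot\alpha_1$; if $x_1=1+x$, then $\varphi_{1+x}\alpha_1$ is of the form $\varphi_y\gamma$ with $\bot<_{1+X}y$, hence equal to its own $\omega$-power. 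Consequently $\omega^\alpha+\beta$ recovers the original element by the definition of addition on $\varphi(1+X)0$. For the side conditions: when $x_1=\bot$ we get $\alpha=\alpha_1<\varphi_\bot\alpha_1=\omega^\alpha\leq\omega^\alpha+\beta$ directly from Lemma~\ref{lem:subterms-ineq}; when $x_1=1+x$, decomposability already forces $n\geq 2$ (an element $\varphi_{1+x}\alpha_1$ is by definition not decomposable), so $\beta>0$ and hence $\alpha=\omega^\alpha<\omega^\alpha+\beta$. The remaining inequalities $\beta<\omega^\alpha+\beta$ and $L^\varphi_X(\alpha),L^\varphi_X(\beta)<L^\varphi_X(\omega^\alpha+\beta)$ are then routine from the usual properties of addition and of $L^\varphi_X$, exactly as in Lemma~\ref{lem:eps-nf} (using $n\geq 2$ in the case $x_1=1+x$, where $L^\varphi_X(\alpha)=L^\varphi_X(\alpha_1)+1$).

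For \emph{uniqueness} I would copy the argument of Lemma~\ref{lem:eps-nf} essentially verbatim: by left cancellation for addition it suffices to show that $\beta<\omega^\alpha+\beta=\omega^\gamma+\delta>\delta$ implies $\alpha=\gamma$. Assuming $\alpha<\gamma$ towards a contradiction, strict monotonicity of $\omega^{(\cdot)}$ (immediate from equivalence~(\ref{eq:veblen}) and the $\varepsilon$-number behaviour of $\varphi_{1+x}$-values) gives $\omega^\alpha<\omega^\gamma$, and since values of $\omega^{(\cdot)}$ are additively principal we get $\omega^\alpha+\omega^\gamma=\omega^\gamma$; substituting into the equation yields
\begin{equation*}
\omega^\alpha+\beta=\omega^\gamma+\delta=\omega^\alpha+\omega^\gamma+\delta=\omega^\alpha+\omega^\alpha+\beta,
\end{equation*}
which is incompatible with $\beta<\omega^\alpha+\beta$. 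The case $\gamma<\alpha$ is symmetric, using $\delta<\omega^\gamma+\delta$.

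I expect no serious obstacle here, since the lemma is deliberately parallel to Lemma~\ref{lem:eps-nf}. The one point that needs genuine attention is the interplay between the two-case definition of $\omega^{(\cdot)}$ and the notion of decomposability: one must notice that in the branch $x_1=1+x$ decomposability automatically supplies a nonzero remainder $\beta$, which is precisely what makes $\alpha<\omega^\alpha+\beta$ go through (this is the analogue of the role played by the clause ``$n=2$ and $\alpha_2=\varepsilon_x$'' in Lemma~\ref{lem:eps-nf}). Everything else reduces to the standard arithmetic of $\varphi(1+X)0$ that is available in $\rca_0$, together with Lemma~\ref{lem:subterms-ineq} and equivalence~(\ref{eq:veblen}).
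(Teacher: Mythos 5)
Your proof is correct and follows essentially the same route as the paper's: the same case split on $x_1=\bot$ versus $x_1=1+x$ for the choice of $\alpha$, the appeal to Lemma~\ref{lem:subterms-ineq} in the $\bot$-case, and the same uniqueness argument copied from Lemma~\ref{lem:eps-nf}. You actually spell out two points the paper leaves implicit (that decomposability forces $n\geq 2$ when $x_1=1+x$, and the verification that $\omega^\alpha=\varphi_{x_1}\alpha_1$ in both branches), and both are handled correctly.
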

\begin{proof}
Given a decomposable element $\varphi_{x_1}\alpha_1+\dots+\varphi_{x_n}\alpha_n\in\varphi(1+X)0$, possibly with $n=1$ and $x_1=\bot$, we set
\begin{equation*}
\alpha:=\begin{cases}
\alpha_1 & \text{if $x_1=\bot$},\\
\varphi_{x_1}\alpha_1 & \text{otherwise},
\end{cases}
\end{equation*}
as well as $\beta:=\varphi_{x_2}\alpha_2+\dots+\varphi_{x_n}\alpha_n$ (in particular we have $\beta=0$ in case $n=1$). By construction we have $\omega^\alpha=\varphi_{x_1}\alpha_1$ and hence $\omega^\alpha+\beta=\varphi_{x_1}\alpha_1+\dots+\varphi_{x_n}\alpha_n$. It is straightforward to see that we have~$\alpha,\beta<\omega^\alpha+\beta$, except when we have $x_1=\bot$. In that case the claim reduces to $\alpha_1<\varphi_{\bot}\alpha_1$, which requires Lemma~\ref{lem:subterms-ineq}. The condition $L^\varphi_X(\alpha),L^\varphi_X(\beta)<L^\varphi_X(\omega^\alpha+\beta)$ is readily verified. Uniqueness follows from basic properties of addition and exponentiation, as in Lemma~\ref{lem:eps-nf}.
\end{proof}

Our goal is to characterize $\varphi(1+X)0$ as a minimal Bachmann-Howard fixed point of the transformation
\begin{equation*}
Y\mapsto T^\varphi_X(Y)=1+(Y+X)\times Y\cong 1+Y^2+X\times Y.
\end{equation*}
Elements of $T^\varphi_X(Y)$ will be written as $\bot$, $\langle y_0,y_1\rangle$ and $\langle x,y\rangle$, with $y_0,y_1,y\in Y$ and~$x\in X$. The supports from Definition~\ref{def:supports} can be given as
\begin{equation*}
\supp^\varphi_Y(\bot)=\emptyset,\qquad
\supp^\varphi_Y(\langle y_0,y_1\rangle)=\{y_0,y_1\},\qquad
\supp^\varphi_Y(\langle x,y\rangle)=\{y\}.
\end{equation*}
Hence a function $\vartheta:T^\varphi_X(Y)\rightarrow Y$ is a Bachmann-Howard collapse if, and only if, the following conditions are satisfied:
\begin{enumerate}
\item[(i)] we have $\vartheta(\bot)<_Y\vartheta(\langle y_0,y_1\rangle)$ for arbitrary elements $y_0,y_1\in Y$, as well as $\vartheta(\bot)<_Y\vartheta(\langle x,y\rangle)$ for arbitrary $y\in Y$ and $x\in X$,
\item[(i$'$)] $\langle y_0,y_1\rangle<_{Y^2}\langle y_0',y_1'\rangle$ implies $\vartheta(\langle y_0,y_1\rangle)<_Y\vartheta(\langle y_0',y_1'\rangle)$, under the side condition that we have $y_0,y_1<_Y\vartheta(\langle y_0',y_1'\rangle)$,
\item[(i$''$)] if we have $y_0,y_1<_Y\vartheta(\langle x,y\rangle)$, then we have $\vartheta(\langle y_0,y_1\rangle)<_Y\vartheta(\langle x,y\rangle)$,
\item[(i$'''$)] $\langle x,y\rangle<_{X\times Y}\langle x',y'\rangle$ implies $\vartheta(\langle x,y\rangle)<_Y\vartheta(\langle x',y'\rangle)$, under the side condition that we have $y<_Y\vartheta(\langle x',y'\rangle)$,
\item[(ii)] we have $y_0,y_1<_Y\vartheta(\langle y_0,y_1\rangle)$ for arbitrary $y_0,y_1\in Y$,
\item[(ii$'$)] we have $y<_Y\vartheta(\langle x,y\rangle)$ for arbitrary $y\in Y$ and $x\in X$.
\end{enumerate}

We can now prove the theorem that was stated in the introduction:

\begin{proof}[Proof of Theorem~\ref{thm:veblen-collapsing}]
In the first half of the proof we show that $\varphi(1+X)0$ is a Bachmann-Howard fixed point of the transformation $T^\varphi_X$. For this purpose we must specify a Bachmann-Howard collapse
\begin{equation*}
\vartheta:1+\varphi(1+X)0\times\varphi(1+X)0+X\times\varphi(1+X)0\rightarrow\varphi(1+X)0.
\end{equation*}
Above we have discussed basic ordinal arithmetic on $\varphi(1+X)0$. As in the previous section we abbreviate $\omega_2(\alpha):=\omega^{\omega^\alpha}$, as well as $1:=\omega^0$. We can now set
\begin{align*}
\vartheta(\bot)&:=0,\\
\vartheta(\langle\alpha,\beta\rangle)&:=\omega_2(\alpha+1)\cdot(\beta+1),\\
\vartheta(\langle x,\gamma\rangle)&:=\varphi_{1+x}(\gamma+1).
\end{align*}
Concerning the third clause, we observe that $\varphi_{1+x}(\gamma+1)\in\varphi(1+X)0$ holds because of~$h(\gamma+1)=\bot$ (note that $\gamma+1$ cannot be of the form $\varphi_y\delta$ with $y\neq\bot$). Terms of the form~$\varphi_\bot\delta$ are used implicitly, via the definition of exponentiation. We need to verify the conditions stated above: Conditions~(i) and~(ii) are immediate, and condition~(ii$'$) follows from Lemma~\ref{lem:subterms-ineq}. To verify condition~(i$'$) one argues just as in the proof of Theorem~\ref{thm:char-exp}. For condition~(i$''$) it suffices to recall that $\varphi_{1+x}(\gamma+1)$ behaves like an $\varepsilon$-number (cf.~also the proof of Theorem~\ref{thm:eps-fp}). In order to establish condition~(i$'''$) we consider an inequality
\begin{equation*}
\langle x,\gamma\rangle<_{X\times\varphi(1+X)0}\langle y,\delta\rangle.
\end{equation*}
If we have $x=y$ and $\gamma<\delta$, then
\begin{equation*}
\vartheta(\langle x,\gamma\rangle)=\varphi_{1+x}(\gamma+1)<\varphi_{1+y}(\delta+1)=\vartheta(\langle y,\delta\rangle)
\end{equation*}
follows from equivalence~(\ref{eq:veblen}). It remains to consider the case where we have $x<_X y$. Due to the side condition in~(i$'''$) we may assume
\begin{equation*}
\gamma<\vartheta(\langle y,\delta\rangle)=\varphi_{1+y}(\delta+1),
\end{equation*}
which can be strengthened to $\gamma+1<\varphi_{1+y}(\delta+1)$. As we also have $1+x<_{1+X}1+y$, we can again infer $\varphi_{1+x}(\gamma+1)<\varphi_{1+y}(\delta+1)$ by equivalence~(\ref{eq:veblen}). In the rest of this proof we show that the Bachmann-Howard fixed point $\varphi(1+X)0$ is minimal. For this purpose we consider an arbitrary order~$Y$ with a Bachmann-Howard collapse
\begin{equation*}
\vartheta:1+Y^2+X\times Y\rightarrow Y.
\end{equation*}
We need to construct an order embedding $f:\varphi(1+X)0\rightarrow Y$. In view of Lemma~\ref{lem:phi-nf} we can recursively define
\begin{align*}
f(0)&:=\vartheta(\bot),\\
f(\omega^\alpha+\beta)&:=\vartheta(\langle f(\alpha),f(\beta)\rangle),\quad\text{where $\alpha,\beta<\omega^\alpha+\beta$},\\
f(\varphi_{1+x}\gamma)&:=\vartheta(\langle x,f(\gamma)\rangle).
\end{align*}
Note that elements of the form $\varphi_\bot\alpha=\omega^\alpha+0$ are covered by the second clause. To show that $\eta<\xi$ implies $f(\eta)<_Y f(\xi)$ we argue by induction on $L^\varphi_X(\eta)+L^\varphi_X(\xi)$. In most cases one argues just as in the proof of Theorem~\ref{thm:eps-fp}. The only case that is essentially new concerns an inequality
\begin{equation*}
\eta=\varphi_{1+x}\gamma<\varphi_{1+z}\delta=\xi.
\end{equation*}
In view of~(\ref{eq:veblen}) we first assume that this inequality holds because we have $x<_X z$ and $\gamma<\xi$. Then we immediately get
\begin{equation*}
\langle x,f(\gamma)\rangle<_{X\times Y}\langle z,f(\delta)\rangle.
\end{equation*}
Due to the induction hypothesis we also obtain
\begin{equation*}
f(\gamma)<_Y f(\xi)=\vartheta(\langle z,f(\delta)\rangle).
\end{equation*}
This is the side condition required by~(i$'''$) above. We can thus infer
\begin{equation*}
f(\eta)=\vartheta(\langle x,f(\gamma)\rangle)<_Y\vartheta(\langle z,f(\delta)\rangle)=f(\xi).
\end{equation*}
Now assume $x=z$ and $\gamma<\delta$. The induction hypothesis yields $f(\gamma)<_Y f(\delta)$, so that we obtain $\langle x,f(\gamma)\rangle<_{X\times Y}\langle z,f(\delta)\rangle$ once again. Using Lemma~\ref{lem:subterms-ineq} we also get~$\gamma<\eta<\xi$, which allows us to conclude as in the previous case. Finally, assume that we have $z<_X x$ and $\eta<\delta$. Using the induction hypothesis and condition~(ii$'$) we obtain
\begin{equation*}
f(\eta)<_Y f(\delta)<_Y\vartheta(\langle z,f(\delta)\rangle)=f(\xi),
\end{equation*}
just as required.
\end{proof}

The statement that $\varphi(1+X)0$ is well founded for any well-order~$X$ is equivalent to the principle of arithmetical transfinite recursion, as shown by H.~Friedman (the first published proof seems to appear in~\cite{rathjen-weiermann-atr}, where a draft by Friedman, Montalb\'an and Weiermann is cited as the original reference). Now that we have proved Theorem~\ref{thm:veblen-collapsing}, we immediately obtain Corollary~\ref{cor:exp-atr} from the introduction.

In the rest of this section we show how the orders~$\Gamma_X$ can be characterized in terms of minimal Bachmann-Howard fixed points. To motivate the definition of these orders we observe that the function $\alpha\mapsto\varphi_\alpha(0)$ is normal. We write $\gamma\mapsto\Gamma_\gamma$ for the derivative of this function. Its range is the club class
\begin{equation}\tag{$\dagger$}\label{eq:gamma}
\{\Gamma_\gamma\,|\,\gamma\text{ an ordinal}\}=\{\alpha\,|\,\varphi_\alpha(0)=\alpha\}=\{\alpha>0\,|\,\varphi_\beta(\gamma)<\alpha\text{ for all }\beta,\gamma<\alpha\}
\end{equation}
of $\Gamma$-numbers. Note that any $\Gamma$-number is an $\varepsilon$-number.

A relativized notation system $\Gamma_X$ for all ordinals below the $X$-th $\Gamma$-number has been described in~\cite[Section~2]{rathjen-atr}. As for the notation system $\varphi(1+X)0$, the underlying set $\Gamma_X$ and the order relation are defined by a simultaneous recursion. In the present case we must also specify a  function $h:\Gamma_X\rightarrow\Gamma_X$ and a set $\hau\subseteq\Gamma_X$ (here $\hau$ refers to the German ``Hauptzahlen'' for (additively) principal ordinals):
\begin{itemize}
 \item We have an element $0\in\Gamma_X\backslash\hau$ with $h(0)=0$.
 \item For each $x\in X$ we have an element $\Gamma_x\in\hau\subseteq\Gamma_X$ with $h(\Gamma_x)=\Gamma_x$.
 \item Given elements $\alpha,\beta\in\Gamma_X$, we get a term $\varphi_\alpha\beta\in\hau\subseteq\Gamma_X$ with $h(\varphi_\alpha\beta)=\alpha$, provided that the following conditions are satisfied:
 \begin{itemize}
   \item we have $h(\beta)\leq_{\Gamma_X}\alpha$,
   \item if $\alpha$ is of the form $\Gamma_x$, then we have $\beta\neq 0$.
 \end{itemize}
 \item Given $n>1$ elements $\alpha_n\leq_{\Gamma_X}\dots\leq_{\Gamma_X}\alpha_1$ from $\hau\subseteq\Gamma_X$, we get a term $\alpha:=\alpha_1+\dots+\alpha_n\in\Gamma_X\backslash\hau$ with $h(\alpha)=0$.
\end{itemize}
The order on $\Gamma_X$ is determined by~(\ref{eq:veblen}) and~(\ref{eq:gamma}), where the latter are to be read as statements about terms from $\Gamma_X$ rather than actual ordinals. Full details of the somewhat lengthy definition can be found in~\cite[Section~2]{rathjen-atr}.

Addition, multiplication and exponentiation on $\Gamma_X$ can be defined as in the case of $\varphi(1+X)0$. We point out that $\varphi_0\alpha\in\Gamma_X$ assumes the role of $\varphi_\bot\alpha\in\varphi(1+X)0$. Elements of the form $\Gamma_x$ or $\varphi_\alpha\beta$ with $\alpha\neq 0$ behave like $\varepsilon$-numbers. To define a length function $L^\Gamma_X:\Gamma_X\rightarrow\mathbb N$ we set
\begin{align*}
 L^\Gamma_X(0)&:=L^\Gamma_X(\Gamma_x):=0,\\
 L^\Gamma_X(\varphi_\alpha\beta)&:=L^\Gamma_X(\alpha)+L^\Gamma_X(\beta)+1,\\
 L^\Gamma_X(\alpha_1+\dots+\alpha_n)&:=L^\Gamma_X(\alpha_1)+\dots+L^\Gamma_X(\alpha_n)+n.
\end{align*}
It will be convenient to use a somewhat different decomposition than before:

\begin{lemma}[$\rca_0$]\label{lem:Gamma-decompose}
 Any non-zero element of $\Gamma_X\backslash\hau$ can be uniquely written as~$\alpha+\beta$ with $\alpha,\beta<_{\Gamma_X}\alpha+\beta$ and $\alpha\in\hau$. Furthermore $L^\Gamma_X(\alpha),L^\Gamma_X(\beta)<L^\Gamma_X(\alpha+\beta)$ holds for any such decomposition.
\end{lemma}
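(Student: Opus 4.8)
The plan is to follow the pattern of Lemmas~\ref{lem:eps-nf} and~\ref{lem:phi-nf} almost verbatim, adapting only to the fact that the relevant decomposition is now into a principal summand plus a remainder, rather than into $\omega^\alpha+\beta$. First I would record the shape of the elements in question: by the generating clauses for $\Gamma_X$, the only elements of $\Gamma_X\backslash\hau$ are $0$ and the sums $\alpha_1+\dots+\alpha_n$ with $n>1$ and $\alpha_n\leq_{\Gamma_X}\dots\leq_{\Gamma_X}\alpha_1$ from $\hau$. Hence a non-zero element of $\Gamma_X\backslash\hau$ is necessarily such a sum with $n\geq 2$, and I would set $\alpha:=\alpha_1\in\hau$ together with $\beta:=\alpha_2+\dots+\alpha_n$ (so $\beta=\alpha_2\in\hau$ when $n=2$, and $\beta$ is again a sum when $n>2$). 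By the definition of addition on $\Gamma_X$ we then have $\alpha+\beta=\alpha_1+\dots+\alpha_n$.

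For the existence half it remains to check $\alpha,\beta<_{\Gamma_X}\alpha+\beta$ and the two length inequalities. The strict inequalities follow from the basic properties of the order and of addition on $\Gamma_X$ that, exactly as for $\varphi(1+X)0$, are provable in $\rca_0$: a principal summand is smaller than any sum in which it occurs together with further summands, and a proper tail $\alpha_2+\dots+\alpha_n$ of a weakly decreasing sum of principal terms is smaller than the whole sum. The length inequalities are immediate from the recursive clauses for $L^\Gamma_X$ and from $n\geq 2$, since $L^\Gamma_X(\alpha+\beta)=L^\Gamma_X(\alpha_1)+\dots+L^\Gamma_X(\alpha_n)+n$ strictly exceeds both $L^\Gamma_X(\alpha_1)=L^\Gamma_X(\alpha)$ and $L^\Gamma_X(\alpha_2)+\dots+L^\Gamma_X(\alpha_n)+(n-1)=L^\Gamma_X(\beta)$.

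For uniqueness, by left cancellation of addition on $\Gamma_X$ it suffices to show that $\alpha+\beta=\gamma+\delta$ with $\alpha,\gamma\in\hau$, $\beta<_{\Gamma_X}\alpha+\beta$ and $\delta<_{\Gamma_X}\gamma+\delta$ forces $\alpha=\gamma$ (and then left cancellation yields $\beta=\delta$). Aiming at a contradiction, assume $\alpha<_{\Gamma_X}\gamma$. Since $\gamma\in\hau$ is additively principal, $\alpha<_{\Gamma_X}\gamma$ gives $\alpha+\gamma=\gamma$, and hence $\alpha+\beta=\gamma+\delta=(\alpha+\gamma)+\delta=\alpha+(\gamma+\delta)=\alpha+(\alpha+\beta)$; cancelling $\alpha$ on the left yields $\beta=\alpha+\beta$, contradicting $\beta<_{\Gamma_X}\alpha+\beta$. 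The case $\gamma<_{\Gamma_X}\alpha$ is symmetric.

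The argument is essentially routine once the cited basic arithmetic on $\Gamma_X$ (monotonicity, associativity and left cancellation of addition, together with the characterization of $\hau$ as the set of additively principal elements) is in place. The only point requiring a little care is the preliminary structural observation that a non-zero element of $\Gamma_X\backslash\hau$ really is a sum of at least two principal terms, which is exactly what makes the remainder $\beta$ well defined and the inequalities $\alpha,\beta<_{\Gamma_X}\alpha+\beta$ genuine; beyond this bookkeeping I do not expect any serious obstacle.
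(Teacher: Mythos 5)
Your proposal is correct and follows essentially the same route as the paper's own (very brief) proof: the same decomposition $\alpha:=\alpha_1$, $\beta:=\alpha_2+\dots+\alpha_n$, and the same uniqueness argument via the additive principality of elements of $\hau$ (i.e.\ $\gamma+\alpha=\alpha$ for $\gamma<_{\Gamma_X}\alpha\in\hau$) combined with monotonicity, exactly as in the proof of Lemma~\ref{lem:eps-nf} to which the paper defers. The only nitpick is that your formula $L^\Gamma_X(\beta)=L^\Gamma_X(\alpha_2)+\dots+L^\Gamma_X(\alpha_n)+(n-1)$ is off in the case $n=2$, where $\beta=\alpha_2\in\hau$ and $L^\Gamma_X(\beta)=L^\Gamma_X(\alpha_2)$; the required strict inequality holds all the more, so nothing is affected.
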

\begin{proof}
 Given an element $\alpha_1+\dots+\alpha_n$ we set $\alpha:=\alpha_1$ and $\beta:=\alpha_2+\dots+\alpha_n$ (in particular $\beta=\alpha_2$ in case $n=2$). It is straightforward to see that this satisfies the desired properties. To establish uniqueness it suffices to observe that $\gamma+\alpha=\alpha$ holds for any $\gamma<_{\Gamma_X}\alpha\in\hau$ (cf.~the proof of Lemma~\ref{lem:eps-nf}).
\end{proof}

We will also need the following variant of Lemma~\ref{lem:subterms-ineq} (cf.~the explanation after the statement of that result):

\begin{lemma}[$\rca_0$]\label{lem:subterms-Gamma}
 We have $\alpha,\beta<_{\Gamma_X}\varphi_\alpha\beta$ for any element $\varphi_\alpha\beta\in\Gamma_X$.
\end{lemma}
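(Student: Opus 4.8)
The plan is to follow the proof of Lemma~\ref{lem:subterms-ineq} closely. By induction on the combined length $L^\Gamma_X(\delta)+L^\Gamma_X(\gamma)$ I would prove the stronger statement that $\delta<_{\Gamma_X}\gamma$ holds whenever $\delta$ is a proper subterm of $\gamma\in\Gamma_X$; the lemma is the instance $\gamma=\varphi_\alpha\beta$. Since $0$ and the atoms $\Gamma_x$ have no proper subterms, $\gamma$ is either a sum $\alpha_1+\dots+\alpha_n$ (with $n>1$) or of the form $\varphi_\alpha\beta$. In the first case $\delta$ is a subterm of some $\alpha_i$, so either $\delta=\alpha_i$ or $\delta<_{\Gamma_X}\alpha_i$ by the induction hypothesis; since the summands lie in $\hau$ and are therefore positive, we have $\alpha_i\leq_{\Gamma_X}\alpha_1<_{\Gamma_X}\gamma$, and the claim follows. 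All the work is thus in the case $\gamma=\varphi_\alpha\beta$, which I would split according to the shape of $\delta\in\{0,\Gamma_x,\varphi_\mu\nu,\text{a sum}\}$ and, where it matters, according to whether $\delta$ occurs inside $\alpha$ or inside $\beta$.

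The case $\delta=0$ is immediate from $\gamma\in\hau$. When $\delta$ is a sum one reduces to its leading summand, using that $\varphi_\alpha\beta\in\hau$ is additively principal, and when $\delta=\varphi_\mu\nu$ the argument runs parallel to Lemma~\ref{lem:subterms-ineq}: one compares via the Veblen equivalence~(\ref{eq:veblen}), using that $\mu=h(\delta)$ satisfies $\mu<_{\Gamma_X}\delta$ by the induction hypothesis and that the formation clause for $\varphi_\alpha\beta$ forces $h(\beta)\leq_{\Gamma_X}\alpha$. The point is that, in each sub-case, the relevant instance of the induction hypothesis is on a pair of strictly smaller combined length --- typically on $(\nu,\gamma)$ for a subterm $\nu$ of $\alpha$ or $\beta$, or on $(\delta,\beta)$ --- so that one never needs to know ``$\alpha<_{\Gamma_X}\gamma$'' beforehand, and no circularity arises.

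I expect the only genuinely new difficulty to be the case $\delta=\Gamma_x$. Because $\Gamma_x$ has length $0$, the induction hypothesis yields nothing when it is paired with $\gamma$ itself, so $\Gamma_x<_{\Gamma_X}\varphi_\alpha\beta$ has to be read off structurally. Here I would use that, by~(\ref{eq:gamma}) and the definition of the order on $\Gamma_X$, the symbol $\Gamma_x$ is compared exactly like the term $\varphi_{\Gamma_x}0$ (which is not admitted by the formation clauses), together with $h(\Gamma_x)=\Gamma_x$ and the two formation conditions for $\varphi_\alpha\beta$. If $\Gamma_x$ occurs inside $\alpha$, then $\Gamma_x\leq_{\Gamma_X}\alpha$ --- with a strict inequality supplied by the induction hypothesis on $(\Gamma_x,\alpha)$ unless $\Gamma_x=\alpha$, in which case the clause ``$\beta\neq 0$ if $\alpha=\Gamma_x$'' is precisely what makes $\varphi_{\Gamma_x}0<\varphi_{\Gamma_x}\beta$ hold --- and~(\ref{eq:veblen}) gives the claim. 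If $\Gamma_x$ occurs inside $\beta$, then $\Gamma_x\leq_{\Gamma_X}\beta$, and I would distinguish $\Gamma_x\leq_{\Gamma_X}\alpha$ from $\Gamma_x>_{\Gamma_X}\alpha$; in the latter case the equality $\Gamma_x=\beta$ is impossible, since it would contradict $h(\beta)\leq_{\Gamma_X}\alpha$, so $\Gamma_x<_{\Gamma_X}\beta$ and~(\ref{eq:veblen}) again yields $\Gamma_x<_{\Gamma_X}\gamma$. Beyond this case the proof should reduce to careful bookkeeping with lengths and to the properties of the Veblen comparison and of ordinal arithmetic on $\Gamma_X$ that have already been recorded.
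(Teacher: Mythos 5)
Your proposal is correct and follows essentially the same route as the paper: the same strengthened claim that every proper subterm is $<_{\Gamma_X}$-smaller, proved by induction on the combined length $L^\Gamma_X(\delta)+L^\Gamma_X(\gamma)$ and settled via the Veblen comparison~(\ref{eq:veblen}) together with the formation conditions $h(\beta)\leq_{\Gamma_X}\alpha$ and ``$\beta\neq 0$ when $\alpha=\Gamma_x$''. You merely place the emphasis differently: the paper records only the case of a $\varphi$-term occurring inside the \emph{first} Veblen argument as the novelty over Lemma~\ref{lem:subterms-ineq}, whereas you elaborate the $\Gamma_x$ case instead; both cases are treated correctly in your sketch.
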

\begin{proof}
 Yet again, the following stronger claim can be established by induction on the joint complexity $L^\Gamma_X(\eta)+L^\Gamma_X(\xi)$ of $\eta$ and $\xi$:
 \begin{equation*}
  \text{``if $\eta$ is a proper subterm of $\xi\in\Gamma_X$, then we have $\eta<_{\Gamma_X}\xi$.''}
 \end{equation*}
 Let us consider the case of $\eta=\varphi_\alpha\beta$ and $\xi=\varphi_\gamma\delta$. In contrast to Lemma~\ref{lem:subterms-ineq}, we must now distinguish two possibilities: If $\eta$ is a subterm of $\delta$, then one argues just as before. Now assume that $\eta$ is a subterm of $\gamma$. Then the induction hypothesis yields both $\alpha<_{\Gamma_X}\gamma$ and $\beta<_{\Gamma_X}\varphi_\gamma\delta$. We can conclude $\varphi_\alpha\beta<_{\Gamma_X}\varphi_\gamma\delta$ by~(\ref{eq:veblen}).
\end{proof}

Our aim is to characterize $\Gamma_X$ in terms of the order transformation
\begin{equation*}
 Y\mapsto T^\Gamma_X(Y)=1+2\times Y^2+X.
\end{equation*}
Elements of $T^\Gamma_X(Y)$ will be written as $\bot$, $\langle i,y_0,y_1\rangle$ and $x$, with $i\in\{0,1\}$, $y_0,y_1\in Y$ and $x\in X$. The supports from Definition~\ref{def:supports} amount to
\begin{equation*}
 \supp^\Gamma_Y(\bot)=\supp^\Gamma_Y(x)=\emptyset\qquad\text{and}\qquad\supp^\Gamma_Y(\langle i,y_0,y_1\rangle)=\{y_0,y_1\}.
\end{equation*}
In view of Definition~\ref{def:collapse} this means that a function $\vartheta:T^\Gamma_X(Y)\rightarrow Y$ is a Bachmann-Howard collapse if, and only if, the following conditions are satisfied:
\begin{enumerate}
 \item[(i)] we have $\vartheta(\bot)<_Y\vartheta(\langle i,y_0,y_1\rangle)$ for arbitrary $i\leq 1$ and $y_0,y_1\in Y$, as well as $\vartheta(\bot)<_Y\vartheta(x)<_Y\vartheta(x')$ for any $x,x'\in X$ with $x<_X x'$,
 \item[(i$'$)] $\langle y_0,y_1\rangle<_{Y^2}\langle y'_0,y'_1\rangle$ implies $\vartheta(\langle i,y_0,y_1\rangle)<_Y\vartheta(\langle i,y'_0,y'_1\rangle)$ for each $i\leq 1$, under the side condition that we have $y_0,y_1<_Y\vartheta(\langle i,y'_0,y'_1\rangle)$,
 \item[(i$''$)] $y_0,y_1<_Y\vartheta(\langle 1,y'_0,y'_1\rangle)$ implies $\vartheta(\langle 0,y_0,y_1\rangle)<_Y\vartheta(\langle 1,y'_0,y'_1\rangle)$,
 \item[(i$'''$)] $y_0,y_1<_Y\vartheta(x)$ implies $\vartheta(\langle i,y_0,y_1\rangle)<_Y\vartheta(x)$ for each $i\leq 1$,
 \item[(ii)] we have $y_0,y_1<_Y\vartheta(\langle i,y_0,y_1\rangle)$ for each $i\leq 1$.
\end{enumerate}

We can now establish the promised characterization:

\begin{theorem}[$\rca_0$]\label{thm:Gamma}
 The order $\Gamma_X$ is a minimal Bachmann-Howard fixed point of the transformation $T^\Gamma_X$, for each order~$X$.
\end{theorem}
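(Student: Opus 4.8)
The plan is to follow the template established by the proofs of Theorem~\ref{thm:char-exp} and Theorem~\ref{thm:eps-fp}, adapting it to the $\Gamma$-number setting. First I would show that $\Gamma_X$ is a Bachmann-Howard fixed point of $T^\Gamma_X$ by exhibiting an explicit collapsing function $\vartheta:1+2\times\Gamma_X^2+X\rightarrow\Gamma_X$. The natural choice, guided by the fact that the summand $X$ produces the $\Gamma$-number generators and the two copies of $Y^2$ should produce decomposable elements and $\varepsilon$-type elements respectively, is
\begin{align*}
\vartheta(\bot)&:=0,\\
\vartheta(\langle 0,\alpha,\beta\rangle)&:=\omega_2(\alpha+1)\cdot(\beta+1),\\
\vartheta(\langle 1,\alpha,\beta\rangle)&:=\varphi_{\alpha+1}(\beta+1),\\
\vartheta(x)&:=\Gamma_x.
\end{align*}
One must check that $\varphi_{\alpha+1}(\beta+1)\in\Gamma_X$ is well-formed, using $h(\beta+1)\leq_{\Gamma_X}\alpha+1$ and that $\alpha+1$ is not of the form $\Gamma_x$. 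Then conditions~(i) and~(ii) are immediate; condition~(i$'$) for $i=0$ is verified as in Theorem~\ref{thm:char-exp}, and for $i=1$ it follows from~(\ref{eq:veblen}) after strengthening the side inequalities by adding $1$ (so that $\alpha<\varphi_{\alpha'+1}(\beta'+1)$ yields $\alpha+1<\varphi_{\alpha'+1}(\beta'+1)$, and similarly for $\beta$, then apply the first case of~(\ref{eq:veblen})). Condition~(i$''$) uses that $\varphi_{\alpha+1}(\beta+1)$ behaves like an $\varepsilon$-number, hence dominates $\omega_2(\alpha_0+1)\cdot(\beta_0+1)$ whenever $\alpha_0,\beta_0$ lie below it. Condition~(i$'''$) uses~(\ref{eq:gamma}): $\Gamma_x$ is a $\Gamma$-number, so $\varphi_\beta\gamma<\Gamma_x$ for all $\beta,\gamma<\Gamma_x$, which covers both $i=0$ and $i=1$.

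For minimality I would take an arbitrary Bachmann-Howard collapse $\vartheta:1+2\times Y^2+X\rightarrow Y$ and construct an embedding $f:\Gamma_X\rightarrow Y$ by recursion on term length, using Lemma~\ref{lem:Gamma-decompose} for the decomposable case and Lemma~\ref{lem:subterms-Gamma} for the subterm inequalities. The clauses are
\begin{align*}
f(0)&:=\vartheta(\bot),\\
f(\alpha+\beta)&:=\vartheta(\langle 0,f(\alpha),f(\beta)\rangle),\quad\text{where $\alpha\in\hau$ and $\alpha,\beta<_{\Gamma_X}\alpha+\beta$},\\
f(\varphi_\alpha\beta)&:=\vartheta(\langle 1,f(\alpha),f(\beta)\rangle),\\
f(\Gamma_x)&:=\vartheta(x).
\end{align*}
Then $\eta<_{\Gamma_X}\xi\Rightarrow f(\eta)<_Y f(\xi)$ is proved by induction on $L^\Gamma_X(\eta)+L^\Gamma_X(\xi)$ (a $\Pi^0_1$ induction, available in $\rca_0$). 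The genuinely new cases compared to Theorems~\ref{thm:char-exp} and~\ref{thm:eps-fp} are the ones comparing two terms $\varphi_\alpha\beta<_{\Gamma_X}\varphi_\gamma\delta$ via the three clauses of~(\ref{eq:veblen}): when $\alpha<_{\Gamma_X}\gamma$ and $\beta<\varphi_\gamma\delta$, one gets $\langle 1,f(\alpha),f(\beta)\rangle<_{Y^2}\langle 1,f(\gamma),f(\delta)\rangle$ from the induction hypothesis on $\alpha<\gamma$, checks the side condition $f(\alpha),f(\beta)<_Y f(\varphi_\gamma\delta)$ using Lemma~\ref{lem:subterms-Gamma} together with the induction hypothesis, and concludes via condition~(i$'$) for $i=1$; the case $\alpha=\gamma$, $\beta<\delta$ is similar; the case $\gamma<_{\Gamma_X}\alpha$ and $\varphi_\alpha\beta<\delta$ uses condition~(ii) and the induction hypothesis as in the earlier proofs. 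One also needs the mixed cases comparing a $\Gamma_x$ with a decomposable or $\varphi$-term, handled using that $\Gamma_x$ behaves like an $\varepsilon$-number and like a $\Gamma$-number (so $\Gamma_x\leq_{\Gamma_X}\alpha$ whenever $\Gamma_x<_{\Gamma_X}\varphi_\alpha\beta$), and the case $\Gamma_x<_{\Gamma_X}\Gamma_{x'}$ which reduces to $x<_X x'$ and condition~(i).

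I expect the main obstacle to be bookkeeping rather than conceptual: getting the side conditions in clauses~(i$'$), (i$''$), (i$'''$) exactly right in each subcase of the $\varphi_\alpha\beta$ versus $\varphi_\gamma\delta$ comparison, and making sure that the recursive definition of $f$ is genuinely well-founded — in particular that in the clause for $\varphi_\alpha\beta$ the subterm $\alpha$ may itself be complex (unlike in the $\varepsilon_X$ case where the first argument of the pairing was bounded), so that one really must appeal to Lemma~\ref{lem:subterms-Gamma} to see $L^\Gamma_X(\alpha),L^\Gamma_X(\beta)<L^\Gamma_X(\varphi_\alpha\beta)$ and that $\alpha,\beta<_{\Gamma_X}\varphi_\alpha\beta$. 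A secondary point requiring care is the need for the ``$+1$'' shifts in the definition of $\vartheta$ on the $i=1$ summand: without them condition~(ii) would fail, exactly as noted after the definition of $\vartheta$ in the proof of Theorem~\ref{thm:eps-fp}. Once these points are pinned down, the verification is a routine case analysis parallel to the two preceding theorems, and I would present it by explicitly working the new $\varphi$-versus-$\varphi$ cases and referring back to Theorems~\ref{thm:char-exp} and~\ref{thm:eps-fp} for the rest.
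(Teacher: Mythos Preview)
Your plan matches the paper's proof almost exactly: the paper uses the same collapsing function (with $\varphi_{1+\alpha}(\beta+1)$ in place of your $\varphi_{\alpha+1}(\beta+1)$, but either choice works and the verifications are the same), the same recursive definition of $f$, and the same induction on $L^\Gamma_X(\eta)+L^\Gamma_X(\xi)$, referring back to Theorems~\ref{thm:char-exp}, \ref{thm:eps-fp} and~\ref{thm:veblen-collapsing} for the old cases.

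One small correction: in the mixed case $\Gamma_x<_{\Gamma_X}\varphi_\alpha\beta$ you write ``so $\Gamma_x\leq_{\Gamma_X}\alpha$'', but this is not always true---consider $\varphi_0(\Gamma_x+1)$. What~(\ref{eq:gamma}) actually gives is $\Gamma_x\leq_{\Gamma_X}\alpha$ \emph{or} $\Gamma_x\leq_{\Gamma_X}\beta$, and the paper handles both alternatives via condition~(ii) applied to $\max_Y\{f(\alpha),f(\beta)\}$. With that fix your argument goes through.
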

\begin{proof}
 Let us first construct a Bachmann-Howard collapse
 \begin{equation*}
  \vartheta:1+2\times\Gamma_X\times\Gamma_X+X\rightarrow\Gamma_X.
 \end{equation*}
 As before we abbreviate $\omega_2(\alpha):=\omega^{\omega^\alpha}$. We then set
 \begin{align*}
  \vartheta(\bot)&:=0,\\
  \vartheta(\langle 0,\alpha,\beta\rangle)&:=\omega_2(\alpha+1)\cdot(\beta+1),\\
  \vartheta(\langle 1,\alpha,\beta\rangle)&:=\varphi_{1+\alpha}(\beta+1),\\
  \vartheta(x)&:=\Gamma_x.
 \end{align*}
 To see that we have $\varphi_{1+\alpha}(\beta+1)\in\Gamma_X$ it suffices to observe that we have $h(\beta+1)=0$ (as $\beta+1$ does not lie in $\hau\backslash\{\varphi_00\}$) and $\beta+1\neq 0$ (which is only relevant if $\alpha=\Gamma_x$). We need to show that the above conditions are satisfied. Condition~(i) is immediate. To establish condition~(i$'$) one argues just as in the proofs of Theorems~\ref{thm:char-exp} and~\ref{thm:veblen-collapsing}. Conditions~(i$''$) and (i$'''$) hold because $\varphi_{1+\alpha}(\beta+1)$ behaves like an $\varepsilon$-number (due to~$1+\alpha\neq 0$) while $\Gamma_x$ behaves like a $\Gamma$-number (cf.~equation~(\ref{eq:gamma})). Using Lemma~\ref{lem:subterms-Gamma}, one readily checks that condition~(ii) is satisfied. To show that $\Gamma_X$ is minimal we consider an arbitrary order~$Y$ with a Bachmann-Howard collapse
 \begin{equation*}
  \vartheta:1+2\times Y^2+X\rightarrow Y.
 \end{equation*}
 Relying on Lemma~\ref{lem:Gamma-decompose}, we define $f:\Gamma_X\rightarrow Y$ by the recursive clauses
 \begin{align*}
  f(0)&:=\vartheta(\bot),\\
  f(\alpha+\beta)&:=\vartheta(\langle 0,f(\alpha),f(\beta)\rangle),\quad\text{where $\alpha,\beta<\alpha+\beta$ and $\alpha\in\hau$},\\
  f(\varphi_\gamma\delta)&:=\vartheta(\langle 1,f(\gamma),f(\delta)\rangle),\\
  f(\Gamma_x)&:=\vartheta(x).
 \end{align*}
 In order to show that $\eta<_{\Gamma_X}\xi$ implies $f(\eta)<_Yf(\xi)$ one argues by induction on the number $L^\Gamma_X(\eta)+L^\Gamma_X(\xi)$. In the following we discuss the cases that are not already covered by the proofs of Theorems~\ref{thm:eps-fp} and~\ref{thm:veblen-collapsing} (where Lemma~\ref{lem:subterms-Gamma} assumes the role of Lemma~\ref{lem:subterms-ineq}). Let us first consider an inequality
 \begin{equation*}
  \eta=\varphi_\alpha\beta<_{\Gamma_X}\Gamma_x=\xi.
 \end{equation*}
 By Lemma~\ref{lem:subterms-Gamma} we get $\alpha,\beta<_{\Gamma_X}\eta<_{\Gamma_X}\xi$, so that the induction hypothesis yields
 \begin{equation*}
  f(\alpha),f(\beta)<_Yf(\xi)=\vartheta(x).
 \end{equation*}
 Using condition (i$'''$) from above we can infer
 \begin{equation*}
  f(\eta)=\vartheta(\langle 1,f(\alpha),f(\beta)\rangle)<_Y\vartheta(x)=f(\xi).
 \end{equation*}
 The case of an inequality $\alpha+\beta<_{\Gamma_X}\Gamma_x$ is treated similarly. Let us now establish the induction step for an inequality
 \begin{equation*}
  \eta=\Gamma_x<_{\Gamma_X}\varphi_\alpha\beta=\xi.
 \end{equation*}
 In view of equation~(\ref{eq:gamma}) we must have $\Gamma_x\leq_{\Gamma_X}\alpha$ or $\Gamma_x\leq_{\Gamma_X}\beta$. In either case we can invoke the induction hypothesis and condition~(ii) to get
 \begin{equation*}
  f(\eta)\leq_Y\textstyle\max_Y\{f(\alpha),f(\beta)\}<_Y\vartheta(\langle 1,f(\alpha),f(\beta)\rangle)=f(\xi).
 \end{equation*}
 A similar argument covers the case of an inequality $\Gamma_x<_{\Gamma_X}\alpha+\beta$ (where we must have $\Gamma_x\leq_{\Gamma_X}\alpha$). Finally we consider an inequality
 \begin{equation*}
  \eta=\Gamma_x<_{\Gamma_X}\Gamma_z=\xi.
 \end{equation*}
 Since $x\mapsto\Gamma_x$ represents a normal function we have $x<_X z$. By condition~(i) we get $f(\eta)=\vartheta(x)<_Y\vartheta(z)=f(\xi)$, just as required.
\end{proof}

The statement that $\Gamma_X$ is well-founded for any well-order~$X$ is equivalent to the assertion that every set lies in a countable coded $\omega$-model of arithmetical transfinite recursion, as shown by Rathjen~\cite{rathjen-atr}. We can conclude with the following:

\begin{corollary}\label{cor:model-atr}
 The following are equivalent over~$\rca_0$:
 \begin{enumerate}[label=(\roman*)]
  \item every set is contained in a countable coded $\omega$-model of $\atr$,
  \item for every well-order~$X$ the transformation $Y\mapsto 1+2\times Y^2+X$ has a well-founded Bachmann-Howard fixed point.
 \end{enumerate}
\end{corollary}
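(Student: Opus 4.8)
The plan is to follow the same two-step pattern used in the proofs of Corollaries~\ref{cor:arith-comp-collapsing} and~\ref{cor:omega-jumps}, combining Rathjen's characterization~\cite{rathjen-atr} of statement~(i) as the assertion that $\Gamma_X$ is well-founded for every well-order~$X$ with Theorem~\ref{thm:Gamma}. A pleasant simplification here is that the transformation appearing in~(ii) is literally $T^\Gamma_X(Y)=1+2\times Y^2+X$, so---in contrast to Corollary~\ref{cor:arith-comp-collapsing}, where one had to pass between $1+X\times Y$ and $T^\omega_X(Y)=1+(1+X)\times Y$---no reshuffling of summands is needed, and the situation parallels Corollary~\ref{cor:omega-jumps} exactly.

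For the direction (i)$\Rightarrow$(ii) I would fix an arbitrary well-order~$X$ and use Rathjen's result together with~(i) to conclude that $\Gamma_X$ is well-founded. Theorem~\ref{thm:Gamma} then supplies a Bachmann-Howard collapse $\vartheta\colon T^\Gamma_X(\Gamma_X)\to\Gamma_X$, so $\Gamma_X$ itself serves as the required well-founded Bachmann-Howard fixed point of $Y\mapsto 1+2\times Y^2+X$.

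For the converse (ii)$\Rightarrow$(i) I would again invoke Rathjen's result and reduce to showing that $\Gamma_X$ is well-founded for an arbitrary well-order~$X$. Applying~(ii) to this $X$ yields a well-founded Bachmann-Howard fixed point $Y$ of $T^\Gamma_X$, and the minimality clause of Theorem~\ref{thm:Gamma} provides an order embedding $f\colon\Gamma_X\to Y$. Since an order embedding into a well-order preserves well-foundedness---a descending sequence in $\Gamma_X$ would map to a descending sequence in $Y$, and this implication is available in $\rca_0$---we conclude that $\Gamma_X$ is well-founded, as required.

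I do not expect any genuine obstacle: essentially all the work has already been done in Theorem~\ref{thm:Gamma} and in Rathjen's equivalence. The only points meriting a brief check are that Rathjen's characterization is indeed formulated (or routinely formalizable) over $\rca_0$, so that the base theory matches, and that the transfer of well-foundedness along $f$ is carried out by an argument that does not secretly use arithmetical comprehension---but this is entirely routine, just as in the proof of Corollary~\ref{cor:omega-jumps}.
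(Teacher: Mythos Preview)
Your proposal is correct and matches the paper's intended argument exactly: the paper does not even write out a proof for this corollary, simply noting that Rathjen's equivalence together with Theorem~\ref{thm:Gamma} yields the result. Your two directions, and your remark that no reshuffling of summands is needed (in contrast to Corollary~\ref{cor:arith-comp-collapsing}), are precisely what the paper leaves implicit.
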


\bibliographystyle{amsplain}
\bibliography{Predicative-collapsing}

\end{document}